\documentclass[reqno,11pt]{amsart}
\usepackage{amsmath,amsfonts,amssymb,amsxtra,latexsym,amscd,enumerate,amsthm,verbatim}
\allowdisplaybreaks
\usepackage{hyperref}
\usepackage{graphicx}
\usepackage{extarrows}
\usepackage{tikz}
\usetikzlibrary{arrows.meta}

\hypersetup{
    colorlinks=true,
    citecolor= blue,
    linkcolor=black,
    filecolor=magenta,
    urlcolor=cyan
    }

\usepackage[margin=1in]{geometry}
\numberwithin{equation}{section}

\usepackage{subcaption}
\newtheorem{thm}{Theorem}[section]

\newtheorem{theorem}[thm]{Theorem}

\newtheorem{lemma}[thm]{Lemma}
\newtheorem{proposition}[thm]{Proposition}

\theoremstyle{definition}

\newtheorem{assumption}[thm]{Assumption}
\newtheorem{definition}[thm]{Definition}

\def\p{\partial}

\def\R{\mathbb{R}}

\theoremstyle{remark}
\newtheorem{rem}{Remark}[section]
\newtheorem{remark}[rem]{Remark}

\begin{document}
	
	\title{On Cauchy Problems for Parabolic Equations with Rough Coefficients}
	
	\author{Cheng Yuan}
\thanks{School of Mathematical Sciences; LMNS and Shanghai Key Laboratory for Contemporary Applied Mathematics, Fudan University, Shanghai 200433, P. R. China.}%
\thanks{Email: cyuan22@m.fudan.edu.cn}%
\thanks{Supported in part by NSFC 123B2008}
	
    \begin{abstract}
The presented work investigates the Cauchy problems for parabolic equations in both non-divergence and divergence forms with rough diffusion coefficients, which commonly arise in composite media, financial pricing, and viscous fluids. Under critical regularity settings, we establish the unique solvability in optimal fractional Sobolev spaces. The first key technical step lies in constructing an effective approximation scheme with truncated and mollified diffusion coefficients $a^\epsilon(t,x)$, for which we rigorously prove the uniform preservation of high-frequency smallness. By incorporating this approximation scheme with paraproduct decomposition, Fefferman-Stein maximal inequalities, Coifman-Meyer bilinear estimates, and refined Sobolev embeddings, we close the uniform a priori estimates and then pass to the limit. Furthermore, we prove that the threshold $s < \frac{1}{2}$ is sharp by constructing explicit counterexamples. These theoretical results provide a rigorous mathematical framework in the study of Cauchy problems for parabolic equations with rough coefficients.
\end{abstract}
\maketitle
\setcounter{tocdepth}{1}
\pagestyle{plain}
\tableofcontents
\linespread{1.5}
\section{Introduction}
Parabolic differential equations arise in most physical and financial models, and the mathematics theories of various parabolic equations are also significant. In this paper, we consider the parabolic equations in both divergence and non-divergence form with rough coefficients:
\begin{equation}\label{heatequationnondiv}
\begin{cases}
\partial_{t}u-a^{ij}(t,x)\partial_{x_{i}x_{j}}^{2}u+b^{j}(t,x)\partial_{x_{j}}u+c(t,x)u=f(t,x),\quad 0<t\leq T, x\in\mathbb{R}^{d};\\
u(0,x)=u_{0}(x),\quad x\in\mathbb{R}^{d}.
\end{cases}
\end{equation}
and
\begin{equation}\label{heatequationdiv}
\begin{cases}
\partial_{t}u-\p_{x_{i}}(a^{ij}(t,x)\p_{x_{j}}u)+b^{j}(t,x)\partial_{x_{j}}u+c(t,x)u=f(t,x),\quad 0<t\leq T, x\in\mathbb{R}^{d};\\
u(0,x)=u_{0}(x),\quad x\in\mathbb{R}^{d}.
\end{cases}
\end{equation}
In above, $ (a^{ij}(t,x))_{d\times d}$ are the diffusion coefficient, and $ f(t,x) $ denotes the external force.
\subsection{Backgrounds}\label{background}
Parabolic equations arise commonly in many natural situations, such as (heat, electric) conduction, diffusion process, and viscous medium. Moreover, the models with variable coefficients are 
significant to describe the phenomenon of time-dependence and inhomogeneity. For instance, the density or the temperature may depend on time and space. 

In our paper, we concern the parabolic equations with rough coefficients, and the coefficients are even not necessary to be continuous. This type parabolic equations are also common in various important fields in physics and finance. We present several application situations:

\textit{Material Science.} We consider composite material. In the shear equation of the composite material, different shear modules between fibers and surrounding matrix leads to the \emph{jumping} of the diffusion coefficient in the equation. Similar settings also arise in the electric conductivity problem with embedded domains. There are several works consider the gradient of the solution (representing the stress field or electric field) and related regularity theory. We refer to \cite{BASL99} for numerical result, and \cite{LV00},\cite{LN03},\cite{FKNN13},\cite{DLY23},\cite{DX26} (also the reference therein) for mathematical analysis.

\textit{Finance Pricing.} Based on Merton's structural method \cite{M74} and the celebrated Black-Scholes pricing model \cite{BS73}, the value of one corporate (zero-coupon) bond at time $t$ and firm value $S$ satisfies an 1-D parabolic equation, while the diffusion coefficients are different constants in high and low rating regions (corresponding to the risk of default), which leads to the \emph{jumping} of the coefficient when the grade of the firm changes. The solvability  and regularity theories of the equation are important, toward the equivalence between the PDE and the original BSDE from the structural model.  For the detailed background and derivation of these models and existing mathematical results, we refer to the books \cite{BR02,J05,HL24} and the reference therein.  The mathematics theory of parabolic equations with jumping coefficient is also closely tied to the theory of stochastic processes, see e.g. \cite{K04}.

\textit{Viscous Fluid.} Consider the compressible Navier-Stokes equations in Lagrangian coordinates, it is a coupled system with both conservation law structure and diffusion structure. More precisely, the diffusion coefficient of the velocity depends on the specific volume. In the rough regularity setting, the specific volume may exhibit to be a shock, which leads to the discontinuity of the diffusion coefficient in the equation of velocity. We refer to the monograph \cite{S94} for the detailed description of related models, and \cite{LY22,WYZ22,WYZ24,CHN24,CLN23} for the recent progress on the well-posedness theory of compressible Navier-Stokes equations under rough regularity.

In this paper, we consider the following parabolic equations in the critical regularity settings:
\begin{equation}\label{heateqnondiv}
\begin{cases}
\partial_{t}u-a(t,x)\partial_{x}^{2}u=f(t,x),\quad 0<t\leq T, x\in\mathbb{R},\\
u(0,x)=u_{0}(x),\quad x\in\mathbb{R},
\end{cases}
\end{equation}
and
\begin{equation}\label{heateqdiv}
\begin{cases}
\partial_{t}u-\p_{x}(a(t,x)\p_{x}u)=f(t,x),\quad 0<t\leq T, x\in\mathbb{R},\\
u(0,x)=u_{0}(x),\quad x\in\mathbb{R}.
\end{cases}
\end{equation}
Without loss of generality, we consider the above 1-D parabolic equations without lower order terms for clarity. Indeed, it is easy to extend our arguments to the equations with lower order terms in arbitrary dimension with slight modifications. In this paper, we will study the solvability and regularity theory for Cauchy problem \eqref{heateqnondiv} and \eqref{heateqdiv} under the critical regularity of the diffusion coefficient $a(t,x)$.
\subsection{Main Results}\label{mainresults}
We first state our main assumptions on the diffusion coefficient $a(t,x)$.
\begin{assumption}\label{mainassume}
Assume the diffusion coefficient $a(t,x)$ satisfies the following hypotheses (see \eqref{nonlocaldef} for the definition of $|\p_{x}|^{s})$:
\begin{itemize}
\item[(H1)] $a(t,x)$ is measurable and (uniformly) elliptic:
\begin{equation}
a(t,x)\geq c_{0}>0,\ \forall(t,x)\in[0,T]\times\R.
\end{equation}
\item[(H2)] For a.e. $t\in[0,T]$, $a(t,x)\in\mathcal{S}_{x}'(\R)$, and
\begin{equation}
\sup_{t\in[0,T]}|\langle a(t,x),\phi(x)\rangle_{\mathcal{S}_{x}'\times\mathcal{S}_{x}}|\leq C_{\phi},\ \forall \phi\in\mathcal{S}(\R).
\end{equation}
\item[$(H3)_{\delta,s}$]Let $S_{N}$ be the Littlewood--Paley projection operator on low frequency $|\xi|\lesssim 2^{N}$ (for details see subsection \ref{littlewoodpaley}), then
\begin{equation}\label{regularityassume}
\sup_{t\in[0,T]}\left(\|\p_{x}S_{0}a\|_{L_{x}^{\frac{1}{s},\infty}}+\||\p_{x}|^{s}(\mathrm{Id}-S_{0})a\|_{L^{\frac{1}{s},\infty}_{x}}\right)\lesssim 1.
\end{equation}
Moreover, there exists $N\in \mathbb{N}$ such that
\begin{equation}\label{smallassume}
\sup_{t\in[0,T]}\||\p_{x}|^{s}(\mathrm{Id}-S_{N})a(t,x)\|_{L^{\frac{1}{s},\infty}_{x}}\leq \delta.
\end{equation}
\end{itemize}
\end{assumption}
Now we state our main results as follow:
\begin{theorem}\label{main} For any $ 0<s<\frac{1}{2}$, there exists a universal constant $\delta_{0}$ only depends on $c_{0}$ and $s$, such that: under the assumptions (H1), (H2), $(H3)_{\delta_{0},s}$, for any continuous initial data $u_{0}(x)$ with $u_{0}(x)\in \dot{H}^{1}(\R)\cap \dot{H}^{1+s}(\R)$ and force term $f(t,x)\in L^{2}([0,T];H^{s}(\R))$, the non-divergence form equation \eqref{heateqnondiv} exists a unique strong solution $u(t,x)$, which satisfies
\begin{equation}\label{nondivenergy}
\sup_{0\leq t \leq T}\|\partial_{x}u(x,t)\|_{H^{s}_{x}}^{2}+\int_{0}^{T}\|\partial_{x}^{2}u\|_{H^{s}_{x}}^{2}dt\lesssim \|u_{0}(x)\|_{\dot{H}^{1}\cap \dot{H}^{1+s}(\R)}^{2}+\|f(t,x)\|_{L^{2}([0,T];H^{s}(\R))}^{2}.
\end{equation}
\end{theorem}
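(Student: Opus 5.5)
We prove Theorem~\ref{main} by combining an approximation scheme with uniform a priori estimates and a passage to the limit. We then argue uniqueness separately, through the same estimate applied to differences.

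\textbf{Step 1 (approximation).} We replace $a$ by $a^\epsilon=\max(\min(a,\epsilon^{-1}),c_0)*\rho_\epsilon$, mollified in $x$ (and in $t$ if convenient). We also mollify $u_0$ and $f$. For each $\epsilon$ the coefficient is smooth, bounded and elliptic with constant $c_0$, so classical theory gives a smooth solution $u^\epsilon$. The key point is that $a^\epsilon$ satisfies $(H3)_{C\delta_0,s}$ uniformly in $\epsilon$, with the same $N$. Mollification commutes with Littlewood--Paley projections and is bounded on $L^{1/s,\infty}$, so it is harmless. Truncation is the delicate part. We will use that $L^{1/s,\infty}$-boundedness of $|\p_x|^s a$ is, up to the low-frequency part, equivalent to a $\dot W^{s,1/s}$/Besov-type control. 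That seminorm is expressed through difference quotients, and these are contracted by the $1$-Lipschitz truncation. Smallness of the high-frequency part then transfers via a frequency-localized version of this argument.

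\textbf{Step 2 (a priori estimate).} We apply $\p_x$ and then $\langle\p_x\rangle^s$ to the equation and test against $\langle\p_x\rangle^s\p_x u$. Equivalently, we test the equation itself against $-\p_x\langle\p_x\rangle^{2s}\p_x u$. The result is
\[
\tfrac12\tfrac{d}{dt}\|\p_xu\|_{H^s}^2+\langle \langle\p_x\rangle^s(a\p_x^2u),\langle\p_x\rangle^s\p_x^2u\rangle=\langle \langle\p_x\rangle^s f,\langle\p_x\rangle^s\p_x^2u\rangle .
\]
We write $\langle\p_x\rangle^s(a\p_x^2u)=a\langle\p_x\rangle^s\p_x^2u+[\langle\p_x\rangle^s,a]\p_x^2u$. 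The main term gives at least $c_0\|\p_x^2u\|_{H^s}^2$ by (H1). The commutator is estimated via the paraproduct decomposition $a=S_Na+(\mathrm{Id}-S_N)a$.

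For the high part we split into $T_{(\mathrm{Id}-S_N)a}\p_x^2u$, $T_{\p_x^2u}(\mathrm{Id}-S_N)a$ and the resonant term. The Coifman--Meyer (Kato--Ponce type) bilinear estimate bounds these by
\[
\||\p_x|^s(\mathrm{Id}-S_N)a\|_{L^{1/s,\infty}}\|\p_x^2u\|_{L^{p,2}},\qquad \tfrac1p=\tfrac12-s .
\]
The refined (Lorentz) Sobolev embedding $\dot H^s\hookrightarrow L^{p,2}$ and the Fefferman--Stein maximal inequality then give the bound $\delta_0\|\p_x^2u\|_{H^s}^2$. This is where $s<\frac12$ is needed, since $p<\infty$ requires it. The low part $S_Na$ is smooth, with $\p_xS_Na$ bounded in $L^{1/s,\infty}$ by a constant depending on $N$ via Bernstein. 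Its commutator therefore loses one derivative, and we bound it by $C_N\|\p_x^2u\|_{L^2}\|\p_x u\|_{H^s}$-type quantities, which we absorb through Young's inequality into $\epsilon\|\p_x^2u\|_{H^s}^2+C_N\|\p_xu\|_{H^s}^2$.

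Choosing $\delta_0\ll c_0$ and applying Gronwall yields \eqref{nondivenergy} for $u^\epsilon$, uniformly in $\epsilon$. The $L^2$ level of $\p_xu$ is included in the $H^s$ norm, and $\dot H^1\cap\dot H^{1+s}$ data is exactly what is needed to control it.

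\textbf{Step 3 (limit and uniqueness).} The uniform bounds, together with the bound on $\p_tu^\epsilon$ in $L^2_tL^2_{loc}$ read off from the equation, give weak-$*$ compactness and Aubin--Lions local strong compactness. Note that $a^\epsilon\to a$ a.e. (using (H2) to identify the limit), and $a^\epsilon\p_x^2u^\epsilon\rightharpoonup a\p_x^2u$ because $a^\epsilon$ converges strongly in $L^q_{loc}$ while $\p_x^2u^\epsilon$ converges weakly. Continuity of $u_0$ fixes the additive constant not seen by $\dot H^1$. For uniqueness, the difference $w$ of two solutions solves the equation with zero data, and the same energy estimate, run directly for $a$ (the solution being regular enough), gives $w\equiv0$.

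\textbf{Main obstacle.} I expect two steps to be hardest. The first is the uniform preservation of high-frequency smallness under truncation in Step 1, since truncation is nonlinear and does not commute with $\mathrm{Id}-S_N$. The second is making the endpoint Lorentz-space bilinear estimate in Step 2 rigorous.
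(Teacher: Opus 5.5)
\textbf{Overall.} Your Step 2 is a sound alternative to the paper's a priori estimate. The paper tests $\Delta_j$ of the equation against $-2^{2js}\p_x^2\Delta_ju^\epsilon$ only for $j\ge K+L$. It handles the resonant term through BMO and Lemma \ref{refinesobo}, and it controls the low frequencies by Proposition \ref{energy}. Your global $\langle\p_x\rangle^s$-commutator closes as well, using Lorentz--H\"older, Fefferman--Stein, a lower-order bound for the $S_Na$ piece and Gronwall. The genuine gaps are in Steps 1 and 3.

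\textbf{Step 1: the threshold cannot stay at $N$.} Your claim that $a^\epsilon$ keeps the smallness with the \emph{same} $N$, uniformly in $\epsilon$, is false in general. Truncating at level $\epsilon^{-1}$ creates a corner wherever $a$ crosses that level. The high-frequency content of that corner is of order the slope $\beta$ of the low and intermediate frequencies of $a$ at the crossing. Assumption \eqref{regularityassume} bounds this slope but does not make it small.

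For example, suppose $\hat a$ is supported in $|\xi|\le\frac12$. Then $(\mathrm{Id}-S_0)a=0$, so (H3) holds with $N=0$ and $\delta=0$. Yet if $a$ crosses the value $10$ with slope $\beta$, then for $\epsilon=\frac1{10}$ the corner contributes an amount of order $\beta$, not $O(\delta_0)$, to $|\p_x|^s(\mathrm{Id}-S_0)a^\epsilon$.

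The Lipschitz-contraction idea yields at best the global, non-small bound. What the frequency-localized argument actually produces is Lemma \ref{smallkeep}: smallness beyond a new threshold $K$, chosen so that $2^{(s-1)K}\|\p_xS_0a\|_{L^{1/s,\infty}}$ and $2^{(1-s)(N-K)}\||\p_x|^s(\mathrm{Id}-S_0)a\|_{L^{1/s,\infty}}$ are $\le\delta_0$. The paper obtains these two error terms by writing $\Delta_j(\chi_\epsilon\circ\bar a_\epsilon)$ as $\Delta_j(\chi_\epsilon\circ S_j\bar a_\epsilon)$ plus a remainder, and bounding everything with maximal functions and Schur's test. Since Step 2 only needs some threshold, replacing $N$ by $K$ repairs your argument once this lemma is supplied.

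\textbf{Step 3: uniqueness needs localization.} Uniqueness does not follow by running the same energy estimate directly for $a$. In the admissible class you only know $\p_xw\in L^\infty_tH^s$, $\p_x^2w\in L^2_tH^s$ and $\p_tw=a\p_x^2w\in L^2_tL^2_{loc}$.

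When $a$ is unbounded, for instance $a_2$ in \eqref{logfunction}, or the growth $|x|^{1-s}$ permitted by $\p_xS_0a\in L^{1/s,\infty}$, the product $a\p_x^2w$ need not lie in $L^2_{t,x}$, let alone in $L^2_tH^s$. The integral $\int a|\p_x^2w|^2dx$ may even be infinite. So neither the $H^s$-level nor the $L^2$-level energy identity can be justified globally for the limit equation.

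The missing idea is a spatial localization. The paper tests with $-\p_{xx}w\,\zeta_N^2$ and splits $a=S_0a+(\mathrm{Id}-S_0)a$. It integrates by parts in the $S_0a$ term and uses $|S_0a(x)|\lesssim|S_0a(0)|+|x|^{1-s}$. It applies Lorentz--H\"older to the $(\mathrm{Id}-S_0)a$ term. Together these bound the cutoff errors by $O(N^{-1})$, and letting $N\to\infty$ gives $\p_xw\equiv0$.
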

\begin{theorem}\label{maintwo}For any $ 0<s<\frac{1}{2}$, there exists a universal constant $\delta_{0}$ only depends on $c_{0}$ and $s$, such that: under the assumptions (H1), (H2), $(H3)_{\delta_{0},s}$, for any initial data $u_{0}(x)$ with $u_{0}(x)\in H^{s}(\R)$ and force term $f(t,x)\in L^{2}([0,T];H^{s-1}(\R))$, the divergence form equation \eqref{heateqdiv} exists a unique weak solution $u(t,x)$, which satisfies
\begin{equation}\label{divenergy}
\sup_{0\leq t \leq T}\|u(x,t)\|_{H^{s}_{x}}^{2}+\int_{0}^{T}\|\partial_{x}u\|_{H^{s}_{x}}^{2}dt\lesssim \|u_{0}(x)\|_{H^{s}(\R)}^{2}+\|f(t,x)\|_{L^{2}([0,T];H^{s-1}(\R))}^{2}.
\end{equation}
\end{theorem}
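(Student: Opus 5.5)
We prove Theorem \ref{maintwo} by the same scheme as Theorem \ref{main}: uniform a priori estimates for a regularized problem, followed by a compactness limit.

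\textbf{Regularization.} We replace $a$ by the truncated and mollified coefficient $a^{\epsilon}(t,x)$ described in the introduction. We use three of its properties, which we take as established for the approximation scheme. First, $a^\epsilon\ge c_0$. Second, $a^\epsilon$ is smooth in $x$. Third, and crucially, $a^\epsilon$ satisfies $(H3)_{C\delta_0,s}$ uniformly in $\epsilon$. In particular the high-frequency smallness \eqref{smallassume} is preserved with the same $N$.

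We also mollify $u_0$ and $f$. The resulting problem $\p_t u^\epsilon-\p_x(a^\epsilon\p_x u^\epsilon)=f^\epsilon$ has smooth coefficients, so classical theory gives a smooth solution $u^\epsilon$.

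\textbf{Basic energy estimate.} Testing the equation against $u^\epsilon$ gives the $L^2$ bound $\sup_t\|u^\epsilon\|_{L^2}^2+c_0\int\|\p_xu^\epsilon\|_{L^2}^2\lesssim \|u_0\|_{L^2}^2+\|f\|_{L^2H^{-1}}^2$.

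\textbf{$H^s$ estimate.} We apply $|\p_x|^s$ to the equation and pair with $|\p_x|^s u^\epsilon$. Write $w=\p_x u^\epsilon$. The principal term is
\[
\langle |\p_x|^s(a^\epsilon w),|\p_x|^s w\rangle=\langle a^\epsilon |\p_x|^s w,|\p_x|^s w\rangle+\langle [|\p_x|^s,a^\epsilon]w,|\p_x|^s w\rangle .
\]
The first piece is bounded below by $c_0\||\p_x|^s w\|_{L^2}^2$, which is the dissipation. It remains to show that the commutator is bounded by $\eta\|w\|_{H^s}^2+C\|w\|_{L^2}^2$ with $\eta<c_0/2$. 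To do this we split $a^\epsilon=S_Na^\epsilon+(\mathrm{Id}-S_N)a^\epsilon$.

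\emph{Low-frequency part.} The piece $S_Na^\epsilon$ has $\p_x S_Na^\epsilon\in L^{1/s,\infty}$, with a bound depending on $N$. The standard Kato--Ponce type commutator estimate moves one derivative onto the coefficient. Combining this with the Sobolev embedding $\dot H^{1/2-s}\subset L^{2/(1-2s),2}$ and weak-type Hölder gives
\[
\|[|\p_x|^s,S_Na^\epsilon]w\|_{L^2}\lesssim_N\|w\|_{H^{s}}^{1-\theta}\|w\|_{L^2}^{\theta}.
\]
Young's inequality then absorbs this term.

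\emph{High-frequency part.} Write $b=(\mathrm{Id}-S_N)a^\epsilon$ and decompose the product $bw$ by paraproducts into $T_bw+T_wb+R(b,w)$.
\begin{itemize}
\item For $T_wb$ and $R(b,w)$ the derivative $|\p_x|^s$ falls on $b$. We estimate $\||\p_x|^s b\|_{L^{1/s,\infty}}\|w\|_{L^{2/(1-2s),2}}$ using the Coifman--Meyer bilinear estimate in Lorentz spaces. The embedding $\|w\|_{L^{2/(1-2s),2}}\lesssim\|w\|_{\dot H^s}$ then makes the bound $\lesssim\delta_0\|w\|_{\dot H^s}^2$.
\item For the commutator $[|\p_x|^s,T_b]w$, the low-frequency pieces $S_{k}b$ are controlled pointwise by the Fefferman--Stein maximal function of $|\p_x|^s b$, with a gain $2^{-ks}$. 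Summing over dyadic blocks with the Fefferman--Stein inequality gives the same bound $\delta_0\|w\|_{\dot H^s}^2$.
\end{itemize}
This is where $s<\tfrac12$ enters: it is needed so that $2/(1-2s)<\infty$ and the relevant sums converge.

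\textbf{Closing the estimate.} The force term is handled by $\langle |\p_x|^s f,|\p_x|^s u\rangle\le\|f\|_{H^{s-1}}\|u\|_{H^{s+1}}$. Choosing $\delta_0$ small relative to $c_0$ and applying Grönwall yields \eqref{divenergy} for $u^\epsilon$, uniformly in $\epsilon$.

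\textbf{Passage to the limit.} The equation bounds $\p_tu^\epsilon$ uniformly in $L^2H^{-1}$. Aubin--Lions then gives strong local compactness of $u^\epsilon$ in $L^2_{t}H^{s'}_{loc}$, together with weak convergence of $\p_xu^\epsilon$ in $L^2H^s$. We also have $a^\epsilon\to a$ a.e. (using (H2) and measurability), and $a^\epsilon$ is uniformly bounded on the support of the truncation. Hence $a^\epsilon\p_xu^\epsilon\rightharpoonup a\p_xu$ weakly, which gives a weak solution satisfying \eqref{divenergy} by weak lower semicontinuity.

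\textbf{Uniqueness.} Take the difference $v$ of two weak solutions, so $v$ has zero data. The $L^2$ energy identity is justified because $\p_x v\in L^2L^2$ and $\p_tv\in L^2H^{-1}$, and it yields $v=0$.

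\textbf{Main obstacle.} The main difficulty is the high-frequency commutator estimate in the Lorentz-space setting, in particular the term $T_b w$. I would first try a maximal-function bound on $|S_kb|$ and then a square-function argument.
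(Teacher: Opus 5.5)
Your a priori estimate is organized differently from the paper's. For Theorem \ref{maintwo} the paper's proof is the blockwise argument of Proposition \ref{high2} carried over. It tests the $\Delta_j$-localized equation only for $j\ge K+L$. There the low-frequency-coefficient commutator $[\Delta_j,S_{j-6}a^\epsilon]$ has an integrable kernel and is made \emph{small}, of size $2^{-L}+\delta_0$. The blocks $j<K+L$ are paid for by the basic energy estimate, and the high--high term goes through Coifman--Meyer with the BMO bound of Lemma \ref{refinesobo}. You instead commute $|\partial_x|^s$ globally and split the coefficient. You treat $S_Na^\epsilon$ as a lower-order perturbation closed by interpolation and Gr\"onwall, and use Lorentz--H\"older with $\|w\|_{L^{2/(1-2s),2}}$ in place of BMO. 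That route is viable, with two caveats. You must also treat the pieces $T_{|\partial_x|^s w}b$ and $R(b,|\partial_x|^s w)$ of $b\,|\partial_x|^s w$, which go by the same square-function argument. And Lemma \ref{smallkeep} gives smallness only above a new borderline $K>N+10$, not above $N$; any $\epsilon$-independent borderline suffices for you.

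The genuine gap is that several of your steps silently treat $a$ as bounded. But (H3) constrains the low frequencies only through $\partial_x S_0 a\in L^{1/s,\infty}$. So $a$ may grow at infinity (the paper's $a_2=\lambda\log(x^2+1)+1$, or even $1+\langle x\rangle^{1-s}$) and may carry small logarithmic singularities.

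\emph{Uniqueness.} This is the most serious point. For $v\in L^\infty H^s\cap L^2 H^{1+s}$, the product $a\,\partial_x v$ is only in $L^2_{loc}$. Hence neither $\partial_t v\in L^2H^{-1}$ nor $\int a|\partial_x v|^2\,dx<\infty$ is available, and your global $L^2$ energy identity is not justified. The missing ingredient is the localization of the paper's Step 3. Test with $v\zeta_N^2$. Absorb the cutoff error $2\int a\,\partial_x v\,v\,\zeta_N\zeta_N'$ into half the dissipation plus $2\int a\,v^2|\zeta_N'|^2$. Then show the latter tends to $0$ as $N\to\infty$, using three facts:
(i) $|S_0a(t,x)|\lesssim 1+|x|^{1-s}$, which follows from (H2) at $x=0$ and $\partial_x S_0a\in L^{1/s,\infty}$;
(ii) $(\mathrm{Id}-S_0)a\in L^{1/s,\infty}$;
(iii) $v\in L^\infty_t(L^2\cap L^{2/(1-2s),2})$.

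\emph{Compactness.} The same issue affects this step. $\partial_t u^\epsilon$ is bounded only in $L^2_tH^{-1}(I_n)$ (cf.\ \eqref{localbound}), not in $L^2H^{-1}$. Also $a^\epsilon$ is not uniformly bounded on compacts. Pass to the limit instead using $a^\epsilon\to a$ strongly in $L^2_{loc}$ against weak convergence of $\partial_x u^\epsilon$.

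\emph{Low-frequency commutator.} Your bound in terms of $w$ alone is false for growing coefficients. Take $a=1+\langle x\rangle^{1-s}$ and $u=\phi(x/R)$ with $R\to\infty$. The commutator then has $L^2$ norm of order $R^{1/2-2s}$, while $\|w\|_{H^s}\lesssim R^{-1/2}$. The correct bound is by $\|\partial_x S_Na^\epsilon\|_{L^{1/s,\infty}}\|u\|_{\dot H^{2s}}$. Interpolate $\|u\|_{\dot H^{2s}}$ between $\|u\|_{L^2}$ and $\|u\|_{\dot H^{1+s}}$; your basic $L^2$ energy estimate then closes the argument.
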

We present several comments for our main assumptions and results:
\begin{remark}
We can see that both the energy estimates \eqref{nondivenergy} and \eqref{divenergy} coincide the standard energy estimates for the heat equation. 
\end{remark}
\begin{remark} For a fixed $s\in(0,\frac{1}{2})$, to obtain the $H^{s}$ regularity of $\p_{x}^{2}u$ in \eqref{heateqnondiv} or $\p_{x}u$ in \eqref{heateqdiv}, the requirement that $a(t,x)$ should carry at least $|\p_{x}|^{s}$  derivative is reasonable in the sense of paraproduct. 

Indeed, the regularity assumption on $a(t,x)$ is also critical in the sense of natural parabolic scaling $a(t,x)\rightarrow a_{\lambda}(t,x):=a(\lambda^{2}t,\lambda x)$. Under subcritical regularity assumption, say e.g. $a(t,x)$ can be embedded into $C^{\epsilon}_{x}$, then the well-known $C^{2,\alpha}$ and $W^{2,p}$ theories can be applied to the heat operator $\p_{t}-a\p_{xx}$, see \cite{L67}. Moreover, whenever the coefficient is H\"older continuous,  the heat operator $\p_{t}-a\p_{xx}$ admits a fundamental solution by Levi's parametrix method (see \cite{F83}) and we can study the corresponding solution directly from the explicit expression.
\end{remark}
\begin{remark}The assumption on the low-frequency part of $a(t,x)$ is much weaker such that our results can be applied to large range of diffusion coefficients that may grow as $|x|\to\infty$. The smallness requirement in \eqref{smallassume} is also weaker than the usual sense. Indeed, we only require smallness outside an \emph{arbitrary given} frequency.
\end{remark}
\begin{remark}
Under our assumptions, the coefficient $a(t,x)$ is allowed to be discontinuous or unbounded thus many classical theories cannot be applied. For example, 
\begin{equation}\label{stepfunction}
a_{1}(x):=
\begin{cases}
\sigma_{1},\ x<0,\\
\sigma_{2},\ x\geq 0,
\end{cases}
\end{equation}
and
\begin{equation}\label{logfunction}
a_{2}(x)=\lambda\cdot\log(x^{2}+1)+1
\end{equation}
fulfills Assumption \ref{mainassume} for suitable $\sigma_{1},\sigma_{2},\lambda>0$ and $\sigma_{1}\neq \sigma_{2}$.
\end{remark}

\begin{remark} In the regularity assumption \eqref{regularityassume}, the underlying Lorentz space $L^{\frac{1}{s},\infty}$ is the weakest one in same scaling, and it contains all critical scaling Besov spaces with regularity index larger than $s$ (see \cite{JJ76} or \cite{T83}):
\begin{equation}
\||\p_{x}|^{s}(\mathrm{Id}-S_{0})h\|_{L^{\frac{1}{s},\infty}}\lesssim_{s,t}\sup_{j\geq 0}2^{jt}\|\Delta_{j}h\|_{L^{\frac{1}{t},\infty}},\ \forall 0<s<t<1.
\end{equation}
\end{remark}

We emphasize that the conclusions in our main results are optimal, in the sense that it can not be further refined \emph{even for smooth and compact-supported initial data and zero force}.\\

\textbf{Optimality.} We will illustrate that the threshold $s<\frac{1}{2}$ is optimal. It means that we can not expect higher regularity of the solution even for smooth and compact-supported initial data.

We illustrate with a typical and significant setting, which commonly arise in financial pricing models: the diffusion coefficient is a step function, i.e. we consider \eqref{heateqnondiv} with coefficient $a(t,x)=a_{1}(x)$, where $a_{1}(x)$ defined in \eqref{stepfunction} with $\sigma_{1}>0,\sigma_{2}>0$ and $\sigma_{1}\neq \sigma_{2}$.  Without loss of generality, we assume $\sigma_{1}=1,\sigma_{2}=2$. The initial data $u_{0}(x)$ is chosen to be an arbitrary smooth function with compact-supported data, and the force is chosen to be zero.

Assume \eqref{heateqnondiv} admits a unique strong solution $u(t,x)$ satisfies \eqref{nondivenergy} for $s = \frac{1}{2}$. Using a uniqueness argument we can obtain that
\begin{itemize}
\item[(i)] For $x<0$:
\begin{equation}\label{leftsol}
\begin{split}
u(t,x)&=\int_{0}^{\infty}\left[(2-\sqrt{2})\frac{1}{\sqrt{4\pi t}}\exp\left(-\frac{(\sqrt{2}x-y)^{2}}{8t}\right)\right]u_{0}(y)dy\\
+&\int_{-\infty}^{0}\left[\frac{1}{\sqrt{4\pi t}}\exp\left(-\frac{(x-y)^{2}}{4t}\right)+(3-2\sqrt{2})\frac{1}{\sqrt{4\pi t}}\exp\left(-\frac{(x+y)^{2}}{4t}\right)\right]u_{0}(y)dy;
\end{split}
\end{equation}
\item[(ii)] For $x>0$:
\begin{equation}\label{rightsol}
\begin{split}
u(t,x)&=\int_{-\infty}^{0}\left[(4\sqrt{2}-4)\frac{1}{\sqrt{8\pi t}}\exp\left(-\frac{(x-\sqrt{2}y)^{2}}{8t}\right)\right]u_{0}(y)dy\\
+&\int_{0}^{\infty}\left[\frac{1}{\sqrt{8\pi t}}\exp\left(-\frac{(x-y)^{2}}{8t}\right)+(2\sqrt{2}-3)\frac{1}{\sqrt{8\pi t}}\exp\left(-\frac{(x+y)^{2}}{8t}\right)\right]u_{0}(y)dy.
\end{split}
\end{equation}
\end{itemize}
Since $\p_{x}^{2}u(t,x)\in L^{2}_{t}H^{\frac{1}{2}}_{x}$ by our hypotheses, it must hold $\p_{x}^{2}u(t,0-)=\p_{x}^{2}u(t,0+)$ for almost all $t>0$. However, by direct calculation from \eqref{leftsol} and \eqref{rightsol} , it fails \emph{unless}
\begin{equation}
    \sqrt{2}u_{0}''(z)+u_{0}''(-z)\equiv 0,\ \forall z\geq 0.
\end{equation}

\subsection{Literature review}
In this subsection, we review some earlier results focus on the regularity theory of \eqref{heatequationnondiv} and \eqref{heatequationdiv}. Due to the vast literature, we focus only on those most relevant to our work.

The regularity theory on \eqref{heatequationnondiv} and \eqref{heatequationdiv} with continuous diffusion coefficient $a(t,x)$ is classical. For H\"older continuous diffusion coefficient $a(t,x)$, the celebrated Schauder estimates consider the $C^{2,\alpha}_{x}$ estimate for \eqref{heatequationnondiv} and $C^{1,\alpha}_{x}$ estimates for \eqref{heatequationdiv}, respectively. For the diffusion coefficient $a(t,x)$ is merely bounded and continuous, the $W^{2,p}_{x}$ estimate for \eqref{heatequationnondiv} and $W^{1,p}_{x}$ estimates for \eqref{heatequationdiv} can be also obtained respectively. We refer to \cite{L67} and the reference therein.

For merely bounded and measurable diffusion coefficients $a(t,x)$, the celebrated De Giorgi--Nash--Moser theory implies that the weak solution of the divergence-form equation \eqref{heatequationdiv} is indeed H\"older continuous, toward Hilbert's 19th problem. We refer to the original literature \cite{D57,N58,M60} and the recent note \cite{V16}. For \eqref{heatequationnondiv} with merely bounded and measurable diffusion coefficient, the H\"older continuity of the solution was proved by the seminal work due to Krylov and Safonov \cite{KS80}.

A major question is that whether we can obtain the higher regularity of the solution \emph{with discontinuous diffusion coefficient}. Without continuity, classical regularity theory (for e.g. Schauder estimates, $W^{1,2}_{p}$ estimates) can not be applied. However whether the $W^{1,2}_{p}$ theory is valid in rough setting is important since there are solutions of stochastic differential equations corresponding to the parabolic operator via Ito's lemma, while Ito's lemma is applicable only for the solution $u\in W^{1,2}_{p}$ for some large $p$ depends on the dimension (see Remark 2.2 in \cite{K07}).

Initially, the $W^{1,2}_{p}$ theory for \eqref{heatequationnondiv} was studied by Bramanti and Cerutti \cite{BC93}, under the assumptions of $a(t,x)\in VMO_{t,x}$ (the space of functions of vanishing mean oscillation). The proof of \cite{BC93} is based on explicit expression of the solution and careful estimates of singular integral operators. The vanishing mean oscillation condition $VMO_{t,x}$ for all $(t,x)$ variables was relaxed to $VMO_{x}$ by Krylov \cite{K07}. In \cite{K07}, the author stated the $W^{1,2}_{p}$ theory for \eqref{heatequationnondiv} and \eqref{heatequationdiv} with $a(t,x)\in VMO_{x}$, the proof is based on delicate pointwise estimates of associated Fefferman-Stein sharp functions. The $W^{1,2}_{p}$ theory was subsequently extended for partially vanishing mean oscillation coefficient, i.e. we do not need the VMO condition for all spatial variables. We refer to \cite{KK07} for $W^{1,2}_{p}$ theory of non-divergence form equation \eqref{heatequationnondiv} and \cite{D10} for $W^{1,2}_{p}$ theory divergence form equation \eqref{heatequationdiv}. In recent years there are also further extensions along this direction, we refer to the review paper \cite{D20} and the reference therein.

In this work, we study the Cauchy problems with rough coefficient with critical regularity. We expect that our argument can be applied to further related models. We also point that our results also imply the $W^{1,2}_{p}$ bounds of the solution for some suitable $p=p(s)$ by interpolation between $L^{2}_{t}H^{2+s}_{x}$ and $H^{1}_{t}H^{s-}_{x}$.
\subsection{Outline of the proof}
In this subsection we briefly present the main steps of our argument.

\textbf{Step 1: Approximation.} To solve the original equations \eqref{heateqnondiv} and \eqref{heateqdiv}, we first construct approximation systems (see \eqref{approxnondiv} and \eqref{approxdiv}) such that we can uniquely solve them to obtain classical approximating solutions. To this end, we modify the rough coefficient $a(t,x)$ to obtain a smooth and bounded function $a^{\epsilon}(t,x)$. Since $a(t,x)$ may be unbounded in our assumptions, the construction of $a^{\epsilon}(t,x)$ is followed from two steps: (i) convolution with smooth mollifier; (ii) truncating the unbounded part of $a(t,x)$. An important fact is that $a^{\epsilon}$ still uniformly preserve the high-frequency smallness, which can be obtained by a careful para-linearized estimates, as well as Stein-Fefferman maximal inequalities. We refer to Section \ref{approximation} for details.

\textbf{Step 2: Non-local energy estimates.} In this step, we need to obtain uniformly-in-$\epsilon$ non-local energy estimates for the approximation systems \eqref{approxnondiv} and \eqref{approxdiv}, toward \eqref{nondivenergy} and \eqref{divenergy}, respectively. Roughly speaking, we can see that it is essential to estimate the following commutator (we denote $[A,B]:=AB-BA$)
\begin{equation}\label{commut}
    \int_{\R}\left([S_{>\Lambda}|\p_{x}|^{s},a^{\epsilon}]\p_{x}^{2}u^{\epsilon}\right)\cdot \left(S_{>\Lambda}|\p_{x}|^{2+s}u^{\epsilon}\right)dx,
\end{equation}
where $S_{>\Lambda}$ is the projection operator to the high frequency $\gtrsim 2^{\Lambda}$ for some borderline frequency scale $\Lambda\gg 1$. To estimate the non-local commutator \eqref{commut}, we use paraproduct decomposition into dyadic frequency blocks (see \eqref{commutator}), and employing Stein-Fefferman maximal inequalities, Coifman-Meyer bilinear estimates, and refined Sobolev embeddings to close the estimates. We also state a sharp Sobolev-to-$BMO$ embeddings (Lemma \ref{refinesobo}) by physical methods, which is independent of interest. For more details we refer to Section \ref{prioriesti}.
\begin{remark} We point that the study of commutator estimate involved non-local operators can be traced back to Kato and Ponce \cite{KP88}. See also the refinements in \cite{Li19}.
\end{remark}
\textbf{Step 3: Passing to the limit.} After obtaining the uniform-in-$\epsilon$ estimates for the approximation systems, by using (weak) compactness we then pass to the limit to obtain the uniquely solvability of original systems \eqref{heateqnondiv} and \eqref{heateqdiv} in optimal fractional Sobolev spaces. Since $a(t,x)$ may unbounded in our assumptions, we also need careful localization procedures in the argument of passing to the limit and proving the uniqueness. We refer to Section \ref{pfmain} for details.
\subsection{Structure of paper} The rest of the paper is structured as follow.

In Section \ref{pre}, we present various analysis tools that were used through the article, including Littlewood-Paley theory, Hardy-Littlewood maximal function, Stein-Fefferman maximal inequalities, space of $BMO$ and Coifman-Meyer binlinear estimates.

In Section \ref{approximation}, we construct the approximation systems. We also show that the approximating diffusion coefficient $a^{\epsilon}$ uniformly preserve the high-frequency smallness.

In Section \ref{prioriesti}, we derive the uniform-in-$\epsilon$ non-local energy estimates in optimal fractional Sobolev spaces  for approximating system.

In Section \ref{pfmain}, we pass to the limit the obtain the uniquely solvability of the original equations in optimal fractional Sobolev spaces.

\emph{Notations.} In this article, we adopt the following notation conventions:
\begin{itemize}
\item[(1)] We use $ A\lesssim B $ means $ A\leq c B $ for some absolute constant $ c $. The notation $ A\sim B $ means $ A\lesssim B $ and $ B\lesssim A $. Additionally, $ A \lesssim_{\star} B $ implies $ A\leq c B $ with $ c $ dependent on a specific quantity $ \star $, i.e., $ c=c(\star) $. 

\item[(2)] For $1\leq p<\infty, 1\leq q\leq \infty$, the Lorentz space $L^{p,q}(\R^{d})$ is defined by
\begin{equation*}
\|u\|_{L^{p,q}}=
\begin{cases}
\displaystyle\left(\int_{0}^{\infty}\left(\lambda [d_{u}(\lambda)]^{\frac{1}{p}}\right)^{q}\frac{d\lambda}{\lambda}\right)^{\frac{1}{q}},\quad q<\infty,\\
 \displaystyle\sup_{\lambda\geq 0}\left(\lambda [d_{u}(\lambda)]^{\frac{1}{p}}\right)\quad q=\infty,
\end{cases}
\end{equation*}
where $d_{u}$ is the distribution function for measurable function $u(x)$
\begin{equation*}
d_{u}(\lambda)=\left|\left\{x\in\R^{d}:|u(x)|>\lambda\right\}\right|.
\end{equation*}
In particular, $L^{p,p}$ is equal to the standard Lebesgue space $L^{p}$. We also adopt the modification that $L^{\infty,\infty}=L^{\infty}$.

\item[(3)] $|\partial_{x}|^{s}$ denotes the non--local differential operator with respect to $x$, defined using the Fourier transform as
\begin{equation}\label{nonlocaldef}
    |\partial_{x}|^{s}f:=\mathcal{F}_{x}^{-1}(|\xi|^{s}\mathcal{F}_{x}(f)).
\end{equation}
\end{itemize}

\section{Preliminaries}\label{pre}
In this section, we present some well-known analytic tools, which will be frequently employed.
\subsection{Littlewood--Paley Decomposition}\label{littlewoodpaley}
First we recall the definition of Sobolev spaces.
\begin{definition}For $s\in\mathbb{R}$, the standard homogeneous Sobolev spaces $\dot{H}^{s}$ and inhomogeneous Sobolev spaces $H^{s}(\R^{d})$ are defined by
\begin{equation}
\begin{split}
&\dot{H}^{s}(\mathbb{R}^{d}):=\left\{u\in\mathcal{S}':\|u\|_{\dot{H}^{s}(\R^{d})}^{2}:=\int_{\R^{d}}|\xi|^{2s}|\hat{u}(\xi)|^{2}d\xi<\infty\right\},\\
&H^{s}(\mathbb{R}^{d}):=\left\{u\in\mathcal{S}':\|u\|_{H^{s}(\R^{d})}^{2}:=\int_{\R^{d}}(1+|\xi|^{2})^{s}|\hat{u}(\xi)|^{2}d\xi<\infty\right\}.
\end{split}
\end{equation}
\end{definition}

We also need the Littlewood--Paley decomposition, for more details we refer to \cite{BCD}. Let
\begin{equation*}
\mathcal{C}=\left\{ \xi \in \R^{d} :\frac{3}{4}\leq |\xi|\leq \frac{8}{3}\right \},\quad \mathcal{D}=\left\{ \xi \in \R^{d}: |\xi|\leq \frac{4}{3}\right\}.
\end{equation*}
There exists $\varphi \in C_{c}^{\infty}(\mathcal{C})$, $\chi \in C_{c}^{\infty}(\mathcal{D})$, such that
\begin{equation*}
\forall \xi\in\R^{d}\ ,\ \chi(\xi)+\sum_{j\geq 0}\varphi(2^{-j}\xi)=1,
\end{equation*}
For $u\in\mathcal{S}'$, define the Littlewood-Paley projection operators
\begin{equation*}
S_{0}u=\mathcal{F}^{-1}\left(\chi(\xi)\hat{u}(\xi)\right),\quad \Delta_{j} u=\mathcal{F}^{-1}\left(\varphi(2^{-j}\xi)\hat{u}(\xi)\right), j\geq 0.
\end{equation*}
We have the following Littlewood--Paley decomposition of $u\in\mathcal{S}'$:
\begin{equation*}
u=S_{0}u+\sum_{j\geq 0}\Delta_{j}u.
\end{equation*}
In particular, by Plancherel's identity, we have the following equivalent norms:
\begin{equation}
\|u\|_{H^{s}}^{2}\sim \|S_{0}u\|_{L^{2}}^{2}+\sum_{j\geq 0}2^{2js}\|\Delta_{j}u\|_{L^{2}}^{2};\\
\end{equation}
Furthermore, we have the following Littlewood--Paley Characteristic hold for Lorentz spaces:
\begin{lemma}\label{lpchar} For $1<p<\infty$ and $1\leq q\leq \infty$, we have
\begin{equation}
\|u\|_{L^{p,q}}\sim_{p,q} \left\|\left(|S_{0}u|^{2}+\sum_{j\geq 0}|\Delta_{j}u|^{2}\right)^{\frac{1}{2}}\right\|_{L^{p,q}}.
\end{equation}
\end{lemma}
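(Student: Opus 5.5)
The plan is to deduce Lemma \ref{lpchar} from the classical Littlewood--Paley square function theorem on $L^{p}$, $1<p<\infty$, and then move from Lebesgue to Lorentz spaces by real interpolation. Write
\begin{equation*}
Tu:=\left(|S_{0}u|^{2}+\sum_{j\geq 0}|\Delta_{j}u|^{2}\right)^{\frac{1}{2}},
\end{equation*}
and view $u\mapsto (S_{0}u,\Delta_{0}u,\Delta_{1}u,\dots)$ as a linear operator $\mathcal{T}$ from scalar functions into $\ell^{2}$-valued functions, so that $Tu=|\mathcal{T}u|_{\ell^{2}}$.

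\textbf{Step 1: the upper bound.} The vector-valued kernel of $\mathcal{T}$ satisfies the H\"ormander condition uniformly. This holds because $\sum_{j}|\nabla(2^{jd}\check{\varphi}(2^{j}\cdot))(x)|^{2}$ is bounded by $|x|^{-2(d+1)}$ up to a constant, and the differences are controlled likewise. By Plancherel and almost orthogonality, $\mathcal{T}$ is bounded on $L^{2}$. The vector-valued Calder\'on--Zygmund theory then gives $\|\mathcal{T}u\|_{L^{p}(\ell^{2})}\lesssim_{p}\|u\|_{L^{p}}$ for every $1<p<\infty$. Take $1<p_{0}<p<p_{1}<\infty$. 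Since $\mathcal{T}$ is linear, the real interpolation identity $(L^{p_{0}},L^{p_{1}})_{\theta,q}=L^{p,q}$ also holds for $\ell^{2}$-valued functions, so $\|\mathcal{T}u\|_{L^{p,q}(\ell^{2})}\lesssim_{p,q}\|u\|_{L^{p,q}}$. Moreover $\|\mathcal{T}u\|_{L^{p,q}(\ell^{2})}=\|Tu\|_{L^{p,q}}$, because the Lorentz quasinorm depends only on the distribution function of $|\mathcal{T}u|_{\ell^{2}}$. This step also works with Marcinkiewicz interpolation when $q=\infty$.

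\textbf{Step 2: the lower bound.} This goes by duality, which is the main technical point. Let $\tilde{\chi},\tilde{\varphi}$ be fattened cutoffs equal to $1$ on the supports of $\chi,\varphi$, and write $\tilde{S}_{0},\tilde{\Delta}_{j}$ for the associated operators. Then
\begin{equation*}
u=\tilde{S}_{0}S_{0}u+\sum_{j}\tilde{\Delta}_{j}\Delta_{j}u=\mathcal{R}(\mathcal{T}u),\qquad \mathcal{R}(g_{-1},g_{0},\dots):=\tilde{S}_{0}g_{-1}+\sum_{j}\tilde{\Delta}_{j}g_{j}.
\end{equation*}
The operator $\mathcal{R}$ is the adjoint of the operator $\tilde{\mathcal{T}}$ built from the tilde cutoffs. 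By Step 1 applied to $\tilde{\mathcal{T}}$ on $L^{p'}$, followed by duality in $L^{p}$, we get $\mathcal{R}:L^{p}(\ell^{2})\to L^{p}$ for all $1<p<\infty$. Interpolating again gives $\mathcal{R}:L^{p,q}(\ell^{2})\to L^{p,q}$. Therefore
\begin{equation*}
\|u\|_{L^{p,q}}=\|\mathcal{R}\mathcal{T}u\|_{L^{p,q}}\lesssim\|Tu\|_{L^{p,q}}.
\end{equation*}

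\textbf{Step 3: justification.} The identity $u=\mathcal{R}\mathcal{T}u$ must be checked in $\mathcal{S}'$. For the conclusion to make sense for a general $u$ with $Tu\in L^{p,q}$, one needs $u$ to be a tempered distribution whose Littlewood--Paley series converges to it. This is automatic for inhomogeneous decompositions, since $S_{0}$ captures all low frequencies and no polynomial ambiguity arises. I would first prove both inequalities for $u\in\mathcal{S}$. I would then extend by density when $q<\infty$, and by a limiting argument using Fatou's property of $L^{p,\infty}$ when $q=\infty$.

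The main obstacle is the vector-valued real interpolation step, in particular the case $q=\infty$ and the quasi-normed range. I would handle it by the $K$-functional, using that $(L^{p_{0}}(\ell^{2}),L^{p_{1}}(\ell^{2}))_{\theta,q}=L^{p,q}(\ell^{2})$ as a standard Banach-valued fact (Bergh--L\"ofstr\"om), so that no sublinear Marcinkiewicz argument is needed.
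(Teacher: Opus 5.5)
Your proposal is correct and follows essentially the paper's route. The paper cites the classical $L^{p}$ square-function theorem, gets $\|Tu\|_{L^{p,q}}\lesssim\|u\|_{L^{p,q}}$ by real interpolation, and obtains the reverse inequality by a standard duality argument. Your only deviation is cosmetic: you dualize at the $L^{p}$ level and then interpolate the reconstruction operator $\mathcal{R}$, rather than pairing $L^{p,q}$ directly against $L^{p',q'}$. Your Step 3 on the admissible class of $u$ is more careful than the paper, which does not address it.
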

\begin{proof}The case of $p=q$ (i.e. standard Lebesgue spaces) is classical, see \cite{GTM249}. For general Lorentz spaces, the one side inequality
\begin{equation*}
\left\|\left(|S_{0}u|^{2}+\sum_{j\geq 0}|\Delta_{j}u|^{2}\right)^{\frac{1}{2}}\right\|_{L^{p,q}}\lesssim \|u\|_{L^{p,q}}
\end{equation*}
follows from real interpolation, and the other side inequality follows from a standard duality argument.
\end{proof}
The Littlewood--Paley projection operators satisfy the following Bernstein's inequalities.
\begin{lemma}\label{bernstein}For $1\leq p\leq\infty, k \geq 0$,
\begin{equation}\label{bernsteineq}
\begin{split}
&\|\nabla^{k}(S_{0}u)\|_{L^{p}(\R^{d})}\lesssim \|S_{0}u\|_{L^{p}(\R^{d})},\\
&\|\nabla^{k}(\Delta_{j}u)\|_{L^{p}(\R^{d})} \sim 2^{kj}\|\Delta_{j}u\|_{L^{p}(\R^{d})}.
\end{split}
\end{equation}
For $1<p<q<\infty, k\geq 0$,
\begin{equation}\label{refinebernstein}
\begin{split}
&\|\nabla^{k}(S_{0}u)\|_{L^{\infty}(\R^{d})}\lesssim \|S_{0}u\|_{L^{p,\infty}(\R^{d})},\\
&\|\nabla^{k}(S_{0}u)\|_{L^{q,1}(\R^{d})}\lesssim \|S_{0}u\|_{L^{p,\infty}(\R^{d})},\\
&\|\nabla^{k}(\Delta_{j}u)\|_{L^{\infty}(\R^{d})}\lesssim 2^{(k+\frac{d}{p})j}\|\Delta_{j}u\|_{L^{p,\infty}(\R^{d})},\\
&\|\nabla^{k}(\Delta_{j}u)\|_{L^{q,1}(\R^{d})}\lesssim 2^{(k+\frac{d}{p}-\frac{d}{q})j}\|\Delta_{j}u\|_{L^{p,\infty}(\R^{d})}.
\end{split}
\end{equation}
\end{lemma}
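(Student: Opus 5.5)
The plan is to prove Lemma \ref{bernstein} by the standard kernel argument. Each estimate comes from writing the frequency-localized function as a convolution with a rescaled Schwartz kernel, and then applying Young's inequality, in its Lorentz-space form (O'Neil's inequality) where needed.

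\textbf{The $L^{p}$ estimates \eqref{bernsteineq}.} Fix $\tilde\chi\in C_c^\infty$ with $\tilde\chi\equiv1$ on $\mathcal D$, and $\tilde\varphi\in C_c^\infty(\R^d\setminus\{0\})$ with $\tilde\varphi\equiv 1$ on $\mathcal C$. Then
\begin{equation*}
S_0u=\check{\tilde\chi}*S_0u,\qquad \Delta_ju=2^{jd}\check{\tilde\varphi}(2^j\cdot)*\Delta_ju .
\end{equation*}
Consequently
\begin{equation*}
\nabla^k S_0u=(\nabla^k\check{\tilde\chi})*S_0u,\qquad \nabla^k\Delta_ju=2^{jk}\,2^{jd}(\nabla^k\check{\tilde\varphi})(2^j\cdot)*\Delta_ju .
\end{equation*}
The kernels $\nabla^k\check{\tilde\chi}$ and $2^{jd}(\nabla^k\check{\tilde\varphi})(2^j\cdot)$ have $L^1$ norms independent of $j$. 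Young's inequality with an $L^1$ kernel therefore gives the upper bounds for every $1\le p\le\infty$.

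For the reverse inequality $2^{kj}\|\Delta_ju\|_{L^p}\lesssim\|\nabla^k\Delta_ju\|_{L^p}$, we use that $\tilde\varphi$ vanishes near the origin. This lets us write
\begin{equation*}
\tilde\varphi(\xi)=\sum_{|\alpha|=k}m_\alpha(\xi)(i\xi)^\alpha, \qquad m_\alpha=\tilde\varphi(\xi)\,\frac{(-i\xi)^\alpha}{|\xi|^{2k}}\cdot c_\alpha ,
\end{equation*}
where the constants $c_\alpha$ come from the multinomial expansion of $|\xi|^{2k}=\sum_{|\alpha|=k} c_\alpha\xi^{2\alpha}$. Each $m_\alpha$ is smooth and compactly supported. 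Rescaling then gives $\Delta_ju=2^{-jk}\sum_\alpha K_{\alpha,j}*\partial^\alpha\Delta_ju$, where $K_{\alpha,j}:=2^{jd}\check m_\alpha(2^j\cdot)$ has $L^1$ norm independent of $j$. Applying Young's inequality again finishes this part.

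\textbf{The Lorentz estimates \eqref{refinebernstein}.} We use the same representations, with the kernel $K_j=2^{jk}2^{jd}(\nabla^k\check{\tilde\varphi})(2^j\cdot)$, and invoke O'Neil's Young inequality in Lorentz spaces:
\begin{equation*}
\|f*g\|_{L^{r,s}}\lesssim\|f\|_{L^{p_1,q_1}}\|g\|_{L^{p_2,q_2}},\qquad \tfrac1r+1=\tfrac1{p_1}+\tfrac1{p_2},\ \tfrac1s\le \tfrac1{q_1}+\tfrac1{q_2}.
\end{equation*}
The two cases are handled as follows.

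For the $L^\infty$ bound, use the duality $L^{p,\infty}$--$L^{p',1}$ (Hölder in Lorentz spaces). This gives $|K_j*g(x)|\le\|K_j\|_{L^{p',1}}\|g\|_{L^{p,\infty}}$, so it remains to compute $\|K_j\|_{L^{p',1}}$. Since $K_j$ is $2^{jk}$ times an $L^1$-normalized dilate of a Schwartz function, scaling gives
\begin{equation*}
\|K_j\|_{L^{p',1}}=2^{jk}2^{jd(1-1/p')}\|\nabla^k\check{\tilde\varphi}\|_{L^{p',1}}=2^{(k+d/p)j}\,C .
\end{equation*}
For the $L^{q,1}$ bound with $p<q$, choose $r$ with $1+\tfrac1q=\tfrac1r+\tfrac1p$; this $r$ lies in $(1,\infty)$ precisely because $1<p<q<\infty$. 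O'Neil's inequality with indices $(r,1)$ and $(p,\infty)$ then outputs $L^{q,1}$. Scaling gives $\|K_j\|_{L^{r,1}}=2^{jk}2^{jd(1-1/r)}C=2^{(k+d/p-d/q)j}C$, as required. The $S_0$ cases are identical with $j=0$.

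\textbf{Main obstacle.} The only delicate point is the endpoint behaviour of the Lorentz Young inequality. Outputting the second index $1$ from inputs $(r,1)*(p,\infty)$ requires O'Neil's refinement, which holds only for non-endpoint exponents: that is, $1<r,p,q<\infty$. This is exactly where the hypothesis $1<p<q<\infty$ is used. The $L^\infty$ case needs no refinement, since it is just Hölder duality.
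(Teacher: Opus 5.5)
Your proposal is correct and follows essentially the same route as the paper. The paper cites \cite{BCD} for the classical Bernstein bounds, which are proved there by exactly your kernel-plus-Young argument, including the multinomial decomposition $|\xi|^{2k}=\sum_{|\alpha|=k}c_\alpha\xi^{2\alpha}$ for the reverse inequality; for the Lorentz refinements it likewise writes the localized pieces as convolutions with rescaled Schwartz kernels and applies Lorentz H\"older and the Young--O'Neil inequality, while you additionally make explicit the scaling of $\|K_j\|_{L^{p',1}}$ and $\|K_j\|_{L^{r,1}}$ and where $1<p<q<\infty$ enters, which the paper leaves implicit.
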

\begin{proof}The estimates in \eqref{bernstein} follow directly from \cite{BCD}. For the Lorentz refinement estimates \eqref{refinebernstein}, rewrite the LHS of \eqref{refinebernstein} as convolution in physical coordinate, and then applying H\"older inequality and Young-O'Neil inequality in Lorentz spaces.
\end{proof}
To analyze the product of two functions, we also need the following \emph{Bony's paraproduct decomposition}, here we denote
\begin{equation*}
S_{j}u=S_{0}u+\sum_{0\leq k<j}\Delta_{k}u.
\end{equation*}
Therefore, we can decompose the product of two functions by
\begin{equation}\label{bony}
\begin{split}
uv&=\sum_{j\geq 1}S_{j-1}u\Delta_{j}v+\sum_{j\geq 1}S_{j-1}v\Delta_{j}u\\
&\quad\quad+S_{0}u\Delta_{0}v+S_{0}v\Delta_{0}u+\sum_{|k-j|\leq 1;k,j\geq 0}\Delta_{k}u\Delta_{j}v.
\end{split}
\end{equation}
In particular, for $q\geq10$, we have
\begin{equation}\label{project}
\begin{split}
\Delta_{q}(uv)&=\sum_{|j-q|\leq 5}\Delta_{q}\left(S_{j-1}u\Delta_{j}v\right)+\sum_{|j-q|\leq 5}\Delta_{q}\left(S_{j-1}v\Delta_{j}u\right)\\
&\quad\quad+\sum_{|k-j|\leq 1, j\geq q-5}\Delta_{q}\left(\Delta_{k}u\Delta_{j}v\right).
\end{split}
\end{equation}
\subsection{Hardy--Littlewood Maximal Function}
We also need to use the Hardy--Littlewood maximal function.
\begin{definition}For $f \in L^{1}_{loc}(\R^{d})$, the Hardy--Littlewood maximal function $\mathcal{M}(f)(x)$ is defined by
\begin{equation*}
\mathcal{M}(f)(x)=\sup_{r>0}\frac{1}{|B(x,r)|}\int_{B(x,r)}|f(y)|dy.
\end{equation*}
\end{definition}
Clearly, $\|\mathcal{M}(f)(x)\|_{L^{\infty}}\leq \|f\|_{L^{\infty}}$. More generally, by the method of real interpolation, for $1<p<\infty, 1\leq q\leq \infty$ we have
\begin{equation}
\|\mathcal{M}(f)\|_{L^{p,q}(\R^{d})}\lesssim_{p,q} \|f\|_{L^{p,q}(\R^{d})},
\end{equation}

The following Fefferman--Stein vector maximal inequality \cite{FS} is useful. For brevity we only consider the index for our application, and it directly follows from adopting the duality argument in \cite{FS}, Section 3.
\begin{lemma}\label{fs}
For $(p,q)\in(2,\infty)\times[1,\infty]$ and a sequence $\{f_{k}\}_{k\geq 0}\in L^{p,q}$, the following estimate holds:
\begin{equation}\label{lpqfs}
\left\|\left(\sum_{k\geq 0}|\mathcal{M}(f_{k})|^{2}\right)^{\frac{1}{2}}\right\|_{L^{p,q}}\lesssim \left\|\left(\sum_{k\geq 0}|f_{k}|^{2}\right)^{\frac{1}{2}}\right\|_{L^{p,q}}.
\end{equation}
\end{lemma}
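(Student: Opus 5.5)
The approach is the one the paper itself points to: Fefferman--Stein's duality argument from \cite{FS}, Section 3, combined with real interpolation to pass from Lebesgue to Lorentz spaces. The plan has three steps.

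\textbf{Step 1: the Lebesgue case.} First I establish the Lebesgue version
\begin{equation*}
\Big\|\Big(\sum_{k}|\mathcal{M}f_{k}|^{2}\Big)^{\frac12}\Big\|_{L^{p}}\lesssim_{p}\Big\|\Big(\sum_{k}|f_{k}|^{2}\Big)^{\frac12}\Big\|_{L^{p}},\qquad 2<p<\infty .
\end{equation*}
Write $\|(\sum|\mathcal{M}f_k|^2)^{1/2}\|_{L^p}^2=\|\sum_k|\mathcal{M}f_k|^2\|_{L^{p/2}}$. Since $p/2>1$, I test against $g\geq0$ with $\|g\|_{L^{(p/2)'}}\le1$. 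The key pointwise-in-$k$ inequality is
\begin{equation*}
\int|\mathcal{M}f|^{2}g\,dx\lesssim\int|f|^{2}\mathcal{M}g\,dx,
\end{equation*}
which follows from the weak $(1,1)$ bound of $\mathcal{M}$ with respect to the pair of weights $(g,\mathcal{M}g)$, the trivial $L^\infty$ bound, and Marcinkiewicz interpolation. Summing over $k$ gives
\begin{equation*}
\int\sum_k|f_k|^2\,\mathcal{M}g\,dx\le\Big\|\sum_k|f_k|^2\Big\|_{L^{p/2}}\|\mathcal{M}g\|_{L^{(p/2)'}}.
\end{equation*}
Because $(p/2)'<\infty$, the maximal bound $\|\mathcal{M}g\|_{L^{(p/2)'}}\lesssim\|g\|_{L^{(p/2)'}}$ finishes this step.

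\textbf{Step 2: Lorentz spaces by interpolation.} The map $T:\{f_k\}\mapsto\{\mathcal{M}f_k\}$ is sublinear. Step 1 shows that it is bounded from $L^{p_i}(\ell^2)$ to $L^{p_i}(\ell^2)$ for any two exponents $2<p_0<p<p_1<\infty$. Real interpolation of these vector-valued Lebesgue spaces (valid for sublinear operators via the $K$-functional, since $\ell^2$ is a fixed Banach space) gives
\begin{equation*}
(L^{p_0}(\ell^2),L^{p_1}(\ell^2))_{\theta,q}=L^{p,q}(\ell^2),
\end{equation*}
whose norm is exactly $\|(\sum|f_k|^2)^{1/2}\|_{L^{p,q}}$. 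This yields the claimed estimate for all $q\in[1,\infty]$, with a constant depending on $p$ and $q$.

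\textbf{Step 3: the main obstacle.} The delicate point is the weighted inequality in Step 1, in particular the weak-type estimate
\begin{equation*}
g(\{\mathcal{M}f>\lambda\})\lesssim\lambda^{-1}\int|f|\,\mathcal{M}g\,dx .
\end{equation*}
I would prove it with a Vitali covering. Cover $\{\mathcal{M}f>\lambda\}$ by balls $B_i$ with $\int_{B_i}|f|>\lambda|B_i|$, and extract disjoint balls $B_{i_j}$ whose dilates $5B_{i_j}$ cover the set. On each such ball, $g(5B_{i_j})\lesssim |B_{i_j}|\inf_{B_{i_j}}\mathcal{M}g$. Hence
\begin{equation*}
g(5B_{i_j})\lesssim\lambda^{-1}\int_{B_{i_j}}|f|\,\mathcal{M}g\,dx,
\end{equation*}
and summing over the disjoint balls gives the estimate. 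To avoid a limiting argument, I would first reduce to finite sequences $\{f_k\}$ and then pass to the limit by monotone convergence.
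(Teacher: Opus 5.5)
Your proposal is correct and follows the route the paper points to. The paper's proof is only a one-line reference to the Fefferman--Stein duality argument, and your Step~1 is exactly that argument: you dualize $L^{p/2}$ and use $\int(\mathcal{M}f)^{2}g\,dx\lesssim\int|f|^{2}\mathcal{M}g\,dx$, and you correctly prove the weighted weak-type $(1,1)$ ingredient by Vitali covering in Step~3. Your passage to $L^{p,q}$ by real interpolation of $L^{p_i}(\ell^{2})$ is a sound way to supply what the paper leaves implicit, and it is the more robust choice: dualizing directly in $L^{p/2,q/2}$ works for $q\geq 2$, which covers the case $q=\infty$ used later in the paper, but for $1\leq q<2$ that space is not normable, whereas your interpolation step covers the whole stated range $q\in[1,\infty]$.
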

We next recall the following pointwise estimate for convolution operators, for details see \cite{GTM249}, section 2.1.
\begin{lemma}\label{convolution}
Assume that $K\geq 0$ is a continuous even function, and $K(x)\geq K(y)$ whenever $|x|\leq |y|$, then
\begin{equation}
\sup_{\mu>0}\left(\int_{\R}\frac{1}{\mu}K\left(\frac{y}{\mu}\right)|f(x-y)|dy\right)\lesssim \|K\|_{L^{1}(\R)}\mathcal{M}(f)(x).
\end{equation}
\end{lemma}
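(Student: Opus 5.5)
The statement immediately above is Lemma~\ref{convolution}: the convolution of $|f|$ with a radially decreasing even kernel is pointwise dominated by $\|K\|_{L^{1}}\mathcal{M}(f)$, uniformly in the dilation $\mu$. The plan is the classical layer-cake argument. By the change of variables $K_\mu(y)=\mu^{-1}K(y/\mu)$ we have $\|K_\mu\|_{L^1}=\|K\|_{L^1}$, and $K_\mu$ is again even, continuous, nonnegative and radially nonincreasing. It therefore suffices to prove
\begin{equation*}
\int_{\R}K(y)|f(x-y)|\,dy\lesssim \|K\|_{L^{1}}\mathcal{M}(f)(x)
\end{equation*}
for a single such kernel $K$ and then apply it to each $K_\mu$.

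First I treat the case where $K$ is a finite positive combination of indicators of centered intervals, $K=\sum_{m}c_m\mathbf{1}_{[-r_m,r_m]}$ with $c_m>0$. For each term,
\begin{equation*}
\int_{\R}\mathbf{1}_{[-r_m,r_m]}(y)|f(x-y)|\,dy=|B(x,r_m)|\cdot\frac{1}{|B(x,r_m)|}\int_{B(x,r_m)}|f|\le |B(x,r_m)|\,\mathcal{M}(f)(x).
\end{equation*}
Summing over $m$ gives the bound $\bigl(\sum_m c_m|B(0,r_m)|\bigr)\mathcal{M}(f)(x)=\|K\|_{L^1}\mathcal{M}(f)(x)$, with constant exactly $1$.

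For a general $K$, the monotonicity and evenness hypotheses make every superlevel set $\{K>\lambda\}$ a centered interval. The layer-cake formula $K(y)=\int_0^\infty\mathbf{1}_{\{K>\lambda\}}(y)\,d\lambda$ therefore writes $K$ as a continuous superposition of such indicators. Tonelli's theorem then gives
\begin{equation*}
\int K(y)|f(x-y)|\,dy=\int_0^\infty\!\int\mathbf{1}_{\{K>\lambda\}}(y)|f(x-y)|\,dy\,d\lambda\le\mathcal{M}(f)(x)\int_0^\infty|\{K>\lambda\}|\,d\lambda=\|K\|_{L^1}\mathcal{M}(f)(x).
\end{equation*}
This avoids any limiting argument with step functions, since the layer-cake identity handles general $K$ directly.

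The only delicate point is whether a superlevel set could be all of $\R$ or fail to be an interval. If $\|K\|_{L^1}<\infty$ (otherwise there is nothing to prove), each set $\{K>\lambda\}$ with $\lambda>0$ has finite measure. Monotonicity in $|x|$ then forces it to be a bounded centered interval, open or closed, which does not affect the averages. Taking the supremum over $\mu>0$ completes the argument.
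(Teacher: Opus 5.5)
Your proof is correct and complete. For $\lambda>0$ the superlevel set $\{K>\lambda\}$ is a bounded centered interval; it is bounded because $|\{K>\lambda\}|\le\lambda^{-1}\|K\|_{L^1}$. Its contribution is at most $|\{K>\lambda\}|\,\mathcal{M}(f)(x)$, and Tonelli together with the layer-cake formula gives the estimate with constant exactly $1$. The preliminary step-function case is only a special instance of this and can be dropped.

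The paper gives no argument of its own and cites Grafakos, \S 2.1. I am recalling rather than quoting that source, but the standard proof there runs differently. One reduces by monotone convergence to compactly supported continuous kernels $K(y)=K_0(|y|)$. Then one writes $\int_{\R}K(y)|f(x-y)|\,dy=\int_0^R G'(r)K_0(r)\,dr$ with $G(r)=\int_{|y|<r}|f(x-y)|\,dy\le 2r\,\mathcal{M}(f)(x)$. Integrating by parts, using $G(0)=0=K_0(R)$, gives $\int_0^R G(r)\,d(-K_0)(r)\le \mathcal{M}(f)(x)\int_0^R 2r\,d(-K_0)(r)=\|K\|_{L^1}\mathcal{M}(f)(x)$.

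Both arguments express $K$ as a positive superposition of centered indicators: the Stieltjes measure $d(-K_0)$ plays the role of your $d\lambda$. The integration-by-parts version needs some regularity of the profile $K_0$, which is why continuity is assumed and an approximation step appears. Your layer-cake version uses only measurability and monotonicity in $|x|$, has no boundary terms or limiting arguments, and carries over verbatim to $\R^{d}$.
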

\subsection{Bounded mean Oscillation functions}
The functional space of bounded mean oscillation (BMO) plays an important role in our proof, first recall the well-known definition of BMO functions.
\begin{definition}
Given $f\in L^{1}_{loc}(\R^{d})$, we say $f\in BMO(\R^{d})$ provided
\begin{equation}
[f]_{BMO}:=\sup_{Q}\frac{1}{|Q|}\int_{Q}|f-(f)_{Q}|dx,\ (f)_{Q}=\frac{1}{|Q|}\int_{Q}f(y)dy,
\end{equation}
the supermum is taken in all cubes in $\R^{d}$.
\end{definition}

We revisit the famous Coifman-Meyer bilinear estimates for Fourier multipliers, for the details we refer \cite{CM78} (see also \cite{Li19} for refinements).
\begin{lemma}\label{cmbilinear}
Let the multi-variable Fourier multiplier $\sigma(\xi,\eta)\in C^{\infty}(\R^{d}\times\R^{d}\setminus (0,0))$ satisfies
\begin{equation}
\begin{split}
&|\p_{\xi}^{\alpha}\p_{\eta}^{\beta}\sigma(\xi,\eta)|\lesssim_{\alpha,\beta}(|\xi|+|\eta|)^{-|\alpha|+|\beta|},\ \forall (\alpha,\beta),\ \forall (\xi,\eta)\neq (0,0);\\
&\sigma(\xi,0)\equiv 0.
\end{split}
\end{equation}
Define the bilinear Fourier integral operator
\begin{equation}
B_{\sigma}(f,g)(x):=\int_{\R^{2d}}e^{ix\cdot(\xi+\eta)}\sigma(\xi,\eta)\hat{f}(\xi)\hat{g}(\eta)d\xi d\eta,\ x\in\R^{d}.
\end{equation}
Then for all $1<p<\infty$, we have
\begin{equation}
\|B_{\sigma}(f,g)\|_{L^{p}(\R^{d})}\lesssim_{p,\sigma}\|f\|_{L^{p}(\R^{d})}\cdot [g]_{BMO(\R^{d})}.
\end{equation}
\end{lemma}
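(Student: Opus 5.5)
This is the classical Coifman--Meyer theorem, and I would prove it by the standard paraproduct reduction, with the sign convention read as the usual Coifman--Meyer class $|\p_{\xi}^{\alpha}\p_{\eta}^{\beta}\sigma|\lesssim(|\xi|+|\eta|)^{-|\alpha|-|\beta|}$. The vanishing condition $\sigma(\xi,0)\equiv 0$ means $B_{\sigma}(f,c)=0$ for constants $c$. So the right-hand side can only depend on the seminorm $[g]_{BMO}$, and $g$ need only be defined modulo constants.

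\textbf{Step 1: frequency splitting.} Using smooth homogeneous cutoffs, I split $\sigma=\sigma_{1}+\sigma_{2}+\sigma_{3}$, supported respectively in $\{|\eta|\leq|\xi|/8\}$, $\{|\xi|\leq|\eta|/8\}$ and $\{|\xi|\sim|\eta|\}$. Each piece still satisfies the symbol bounds, and $\sigma_{1}(\xi,0)=0$. On each dyadic block $|\xi|+|\eta|\sim 2^{j}$, I expand the rescaled symbol in a double Fourier series on a fixed cube. The symbol estimates give rapidly decaying coefficients, uniformly in $j$. After summing the series, each $B_{\sigma_{i}}$ becomes a rapidly convergent superposition of model operators built from Littlewood--Paley-type projections $\Delta_{j}$, $S_{j}$ with modulated smooth kernels. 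It therefore suffices to bound the model operators uniformly.

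\textbf{Step 2: the pieces where $g$ is not at lowest frequency.} For $\sigma_{2}$ the model is $\Pi(f,g)=\sum_{j}S_{j-3}f\,\Delta_{j}g$. I bound it by duality against $h\in L^{p'}$, using that $\sum_{j}|\Delta_{j}g(x)|^{2}\delta_{2^{-j}}(t)\,dx$ is a Carleson measure with norm $\lesssim[g]_{BMO}^{2}$. Carleson's embedding then gives
\begin{equation*}
\Big|\int\Pi(f,g)h\Big|\lesssim [g]_{BMO}\,\|N(f)\|_{L^{p}}\,\Big\|\Big(\sum_{j}|\tilde{\Delta}_{j}h|^{2}\Big)^{\frac12}\Big\|_{L^{p'}},
\end{equation*}
where $N(f)$ is a nontangential maximal function dominated by $\mathcal{M}(f)$ via Lemma \ref{convolution}. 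The $L^{p}$-boundedness of $\mathcal{M}$ and the square-function characterization of $L^{p'}$ close this estimate. The resonant piece $\sigma_{3}$ has the model $\sum_{j}S_{j+2}(\Delta_{j}f\,\Delta_{j}g)$. I dualize it in the same way, placing $S_{j+2}h$ (controlled by $\mathcal{M}h$) in the maximal slot and $\Delta_{j}f$ in the square-function slot.

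\textbf{Step 3: the piece where $g$ is at low frequency.} For $\sigma_{1}$ I use $\sigma_{1}(\xi,0)=0$ and Taylor's formula in $\eta$: on $|\xi|\sim2^{j}$, $|\eta|\sim2^{k}$ with $k<j-3$, the localized symbol has size $\lesssim 2^{k-j}$ with the same smoothness. The Fourier-series coefficients for the pair of blocks $(j,k)$ therefore gain a factor $2^{-(j-k)}$, and the contribution of $S_{0}g$ vanishes. The model becomes $\sum_{j}\sum_{k<j}2^{-(j-k)}\tilde{\Delta}_{j}f\,\tilde{\Delta}_{k}g$. Since $\|\Delta_{k}g\|_{L^{\infty}}\lesssim[g]_{BMO}$, summing in $k$ gives pointwise $\lesssim[g]_{BMO}$ coefficients multiplying $\tilde{\Delta}_{j}f$. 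The output is frequency localized near $2^{j}$, so the square function of the sum is controlled by $[g]_{BMO}(\sum_{j}|\tilde{\Delta}_{j}f|^{2})^{1/2}$, whose $L^{p}$ norm is $\lesssim\|f\|_{L^{p}}$.

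\textbf{Main obstacle.} I expect the hardest part to be the rigorous reduction in Step 1. The Fourier expansion of $\sigma$ on dyadic regions must produce coefficients decaying fast enough to absorb the polynomial losses from the modulations in the model operators, uniformly in $j$. The Carleson-measure input in Step 2 is classical and can be cited; if needed, the Carleson bound for $|\Delta_{j}g|^{2}$ follows from the John--Nirenberg-type localization $g-(g)_{Q}$ on dilates of $Q$.
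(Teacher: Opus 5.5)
The paper does not prove this lemma; it quotes it from Coifman--Meyer \cite{CM78}. Your proposal reconstructs the argument behind that citation, and the reconstruction is the standard one and is sound. Its ingredients are the trichotomy $\sigma_{1},\sigma_{2},\sigma_{3}$, a dyadic Fourier-series expansion whose rapidly decaying coefficients absorb the polynomial losses from the translations, Carleson-measure duality where $g$ sits at comparable or higher frequency, and the Taylor gain $2^{k-j}$ from $\sigma(\xi,0)=0$ where $g$ sits at lower frequency.

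What this buys over the citation is transparency about the hypotheses. You were right to read the symbol bound as $(|\xi|+|\eta|)^{-|\alpha|-|\beta|}$. With the literal exponent $-|\alpha|+|\beta|$ one may take $\sigma(\xi,\eta)=\lambda(\eta)$, where $\lambda$ is smooth, vanishes near $0$, and equals $e^{i|\eta|^{2}}$ for $|\eta|\geq1$. Then $B_{\sigma}(f,g)=f\cdot\lambda(D)g$ up to normalization, and the claimed bound would force $\lambda(D)$ to be bounded on $L^{\infty}$, which it is not.

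Your argument also shows that the vanishing condition is needed only for $\sigma_{1}$. The reason is that $S_{j}g$ is not controlled by $[g]_{BMO}$, while every homogeneous block $\Delta_{k}g$, $k\in\mathbb{Z}$, is. So in Step 3 you should work with homogeneous blocks throughout. The phrase ``the contribution of $S_{0}g$ vanishes'' is true only for the zero frequency, and the paper's inhomogeneous $S_{0}g$ is unbounded for $g=\log|x|$. Likewise, the paper's only use of the lemma, \eqref{claimg5jb}, involves a symbol supported in $|\xi|\sim|\eta|$, so only your resonant case is needed there.

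The price is the bookkeeping you already flag, plus one step you pass over quickly in Step 2. For $p\neq2$ the Carleson (tent-space) argument naturally produces the conical square function of $\tilde{\Delta}_{j}h$, not the vertical one in your display. For band-limited pieces the two are comparable in $L^{p'}$ via a Peetre-type maximal function and the Fefferman--Stein inequality. Alternatively, you can sidestep this by proving the $L^{2}$ bound and extending to $1<p<\infty$ with Calder\'on--Zygmund theory.
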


\section{Approximation systems}\label{approximation}
First we construct the approximation systems for $0<\epsilon\leq \frac{1}{10}$
\begin{equation}\label{approxnondiv}
\begin{cases}
\p_{t}u^{\epsilon}-a^{\epsilon}(t,x)\p_{x}^{2}u^{\epsilon}&=f^{\epsilon}(t,x),\quad 0<t\leq T, x\in\R,\\
u^{\epsilon}(0,x)=u_{0}^{\epsilon}(x),
\end{cases}
\end{equation}
and
\begin{equation}\label{approxdiv}
\begin{cases}
\p_{t}u^{\epsilon}-\p_{x}(a^{\epsilon}(t,x)\p_{x}u^{\epsilon})&=f^{\epsilon}(t,x),\quad 0<t\leq T, x\in\R,\\
u^{\epsilon}(0,x)=u_{0}^{\epsilon}(x),
\end{cases}
\end{equation}
In above $(u_{0}^{\epsilon}(x), f^{\epsilon}(t,x), a^{\epsilon}(t,x))$ is defined by
\begin{equation}\label{defapp}
\begin{split}
&u_{0}^{\epsilon}(x)=\kappa_{\epsilon}(x)\cdot (\eta_{\epsilon}*_{x}u_{0})(x),\\
&f^{\epsilon}(x)=\kappa_{\epsilon}(x)\cdot (\tilde{\eta}_{\epsilon}*_{t,x}f)(t,x),\\
&a^{\epsilon}(x)=\chi_{\epsilon}\left((\tilde{\eta}_{\epsilon}*_{t,x}a)(t,x)\right),
\end{split}
\end{equation}
where $\eta_{\epsilon}$ is the standard one-dimensional mollifier and $\tilde{\eta}_{\epsilon}$ is the standard two-dimensional mollifier. The cut-off functions $\kappa_{\epsilon}(\cdot)$ and $\chi_{\epsilon}(\cdot)$ are defined by
\begin{equation}
\kappa_{\epsilon}(x):=
\begin{cases}
1,\ |x|\leq\epsilon^{-1},\\
0,\ |x|\geq 2\epsilon^{-1},
\end{cases}
\quad
\chi_{\epsilon}(z):=
\begin{cases}
z,\ |z|\leq\epsilon^{-1},\\
2\epsilon^{-1},\ |z|\geq 2\epsilon^{-1}.
\end{cases}
\end{equation}
\begin{remark}The cut-off function $\chi_{\epsilon}(\cdot)$ is necessary since $a(t,x)$ may be unbounded.
\end{remark}
We have the following simple but important facts, and the proof follows from standard convolution estimates and tame estimates, hence we omit it.
\begin{lemma}\label{mollifierbound}
Assume $u_{0}(x)$ and $f(t,x)$ satisfies the assumptions in Theorem \ref{main}, then
\begin{equation}
\begin{split}
&\|u_{0}^{\epsilon}\|_{\dot{H}^{r}(\R)}\lesssim \|u_{0}\|_{\dot{H}^{1}(\R)}+\|u_{0}\|_{\dot{H}^{r}(\R)}+o_{\epsilon}(1),\ \forall r\in[1,1+s],\\
&\|f^{\epsilon}\|_{L^{2}([0,T]\times H^{t}(\R))}\lesssim \|f\|_{L^{2}([0,T]\times H^{t}(\R))}+o_{\epsilon}(1),\ \forall t\in[0,s].
\end{split}
\end{equation}
Assume $u_{0}(x)$ and $f(t,x)$ satisfies the assumptions in Theorem \ref{maintwo}, then
\begin{equation}
\begin{split}
&\|u_{0}^{\epsilon}\|_{H^{r}(\R)}\lesssim \|u_{0}\|_{H^{r}(\R)}+o_{\epsilon}(1),\ \forall r\in[0,s],\\
&\|f^{\epsilon}\|_{L^{2}([0,T]\times H^{t}(\R))}\lesssim \|f\|_{L^{2}([0,T]\times H^{t}(\R))}+o_{\epsilon}(1).\ \forall t\in [-1,s-1].
\end{split}
\end{equation}
\end{lemma}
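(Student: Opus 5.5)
The plan is to treat the two operations in \eqref{defapp}, mollification and multiplication by the cut-off $\kappa_{\epsilon}$, one after the other. Each estimate is split as
\begin{equation*}
g^{\epsilon}=\kappa_{\epsilon}(\eta_{\epsilon}*g)=\eta_{\epsilon}*g+(\kappa_{\epsilon}-1)(\eta_{\epsilon}*g),
\end{equation*}
with $g=u_{0}$ or $g=f$, and $\tilde{\eta}_{\epsilon}$ in place of $\eta_{\epsilon}$ for $f$. I take $\kappa_{\epsilon}$ to be a smooth cut-off, so that $\kappa_{\epsilon}(x)=\kappa(\epsilon x)$ and $\|\p_{x}^{k}\kappa_{\epsilon}\|_{L^{\infty}}\lesssim\epsilon^{k}$. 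The mollified term is harmless. Since $|\hat{\eta}_{\epsilon}|\le 1$, Plancherel gives $\|\eta_{\epsilon}*g\|_{\dot{H}^{r}}\le\|g\|_{\dot{H}^{r}}$ and $\|\eta_{\epsilon}*g\|_{H^{t}}\le\|g\|_{H^{t}}$ for every real $r,t$, including the negative indices $t\in[-1,s-1]$. For $f$, the space-time mollifier also commutes with the $x$-Fourier multiplier; after extending $f$ by zero outside $[0,T]$, Young's inequality in $t$ keeps the $L^{2}_{t}$ norm bounded. So the task reduces to showing that the cut-off error is $o_{\epsilon}(1)$, or at worst bounded by the right-hand side.

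For the cut-off error in the Theorem \ref{maintwo} setting ($r\in[0,s]$, $H^{r}$ with $r<1$), I would use the fractional Leibniz (Kato--Ponce) rule, or interpolate between $L^{2}$ and $H^{1}$:
\begin{equation*}
\|(1-\kappa_{\epsilon})h\|_{H^{r}}\lesssim\|(1-\kappa_{\epsilon})h\|_{L^{2}}^{1-r}\|(1-\kappa_{\epsilon})h\|_{H^{1}}^{r}.
\end{equation*}
Then $\|(1-\kappa_{\epsilon})h\|_{H^{r}}\lesssim \|h\|_{H^{r}(|x|\gtrsim\epsilon^{-1})}+\epsilon^{r}\|h\|_{L^{2}}$. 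The first term tends to $0$ by dominated convergence, using the $\epsilon$-uniform tail control of $\eta_{\epsilon}*g\to g$ in $H^{r}$. To make this rigorous, I would approximate $g$ by Schwartz functions, for which everything is explicit, and use that the maps are uniformly bounded. For the negative-index case $t\in[-1,s-1]$ I would argue by duality: multiplication by $\kappa_{\epsilon}$ is bounded on $H^{-t}$ with $-t\in[1-s,1]$, uniformly in $\epsilon$, since $\|\kappa_{\epsilon}\|_{W^{1,\infty}}\lesssim1$. This gives $\|\kappa_{\epsilon}h\|_{H^{t}}\lesssim\|h\|_{H^{t}}$ directly, so the stated ``$\lesssim$'' bound holds without even needing the $o_{\epsilon}(1)$ term. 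The same uniform-multiplier argument works for $f^{\epsilon}\in L^{2}_{t}H^{t}_{x}$ with $t\in[0,s]$ in the Theorem \ref{main} setting.

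The main obstacle is $u_{0}$ in Theorem \ref{main}. Here $u_{0}\in\dot{H}^{1}\cap\dot{H}^{1+s}$ need not be in $L^{2}$, and cutting off creates the term $(\p_{x}\kappa_{\epsilon})(\eta_{\epsilon}*u_{0})$, which involves $u_{0}$ itself and not its derivative. I would first subtract a constant, using that $u_{0}$ is continuous: for instance, replace $u_{0}$ by $u_{0}-u_{0}(0)$, which changes none of the homogeneous norms (I interpret the construction modulo this normalization). Then I would apply a Hardy-type estimate on the annulus $A_{\epsilon}=\{\epsilon^{-1}\le|x|\le2\epsilon^{-1}\}$. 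The quantity
\begin{equation*}
\epsilon\,\|u_{0}\|_{L^{2}(A_{\epsilon})}\lesssim\|u_{0}'\|_{L^{2}(|x|\gtrsim\epsilon^{-1})}+\epsilon^{1/2}\,\big|\text{average of }u_{0}\text{ on }A_{\epsilon}\big|
\end{equation*}
is handled by the one-dimensional Poincaré inequality on $A_{\epsilon}$. The average is controlled by $|u_{0}(0)|$ plus $\int_{0}^{2/\epsilon}|u_{0}'|\le(2/\epsilon)^{1/2}\|u_{0}'\|_{L^{2}}$, which is only $O(1)$, not small. This is the delicate point. So the $\dot{H}^{1}$ bound gives ``$\lesssim\|u_{0}\|_{\dot{H}^{1}}$'' rather than $o(1)$, which is consistent with the $\|u_{0}\|_{\dot{H}^{1}}$ term appearing in the statement. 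Finally, for $\dot{H}^{r}$ with $r\in(1,1+s]$, I would write $\p_{x}u_{0}^{\epsilon}=\kappa_{\epsilon}\p_{x}(\eta_{\epsilon}*u_{0})+(\p_{x}\kappa_{\epsilon})(\eta_{\epsilon}*u_{0})$. The first piece is handled by the fractional Leibniz rule in $\dot{H}^{r-1}$, using $\|\kappa_{\epsilon}\|_{L^{\infty}}+\||\p_{x}|^{r-1}\kappa_{\epsilon}\|_{L^{1/(r-1)}}\lesssim1$, which holds by scaling. The second piece is handled by interpolating the $L^{2}$ bound above with the $\dot{H}^{1}$ bound obtained after one further derivative, which costs an extra factor $\epsilon$. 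Together these yield the claimed inequality.
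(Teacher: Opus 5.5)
Your proposal is correct and follows the route the paper only gestures at (``standard convolution estimates and tame estimates''). The ingredients are contraction of the mollifier by Plancherel/Young, uniform $H^{r}$-multiplier and fractional Leibniz bounds for $\kappa_\epsilon$, and your annulus Poincar\'e estimate for $\kappa_\epsilon'(\eta_\epsilon*u_0)$, which supplies the one detail the paper leaves implicit.

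Two simplifications are available. First, the uniform multiplier bound you already use for the negative indices gives the $H^{r}$, $r\in[0,s]$, estimate for $u_0$ directly, so the tail/density argument is not needed. Second, the normalization $u_0\mapsto u_0-u_0(0)$ is unnecessary: $\|c\,\kappa_\epsilon\|_{\dot{H}^{r}}=|c|\,\epsilon^{r-\frac{1}{2}}\|\kappa\|_{\dot{H}^{r}}\to0$ for $r\geq1$. In fact the whole annulus term is $o_\epsilon(1)$, not just $O(1)$, since $R^{-1/2}\int_{-R}^{R}|u_0'|\,dx\to0$ as $R\to\infty$ when $u_0'\in L^{2}$.
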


 By the standard parabolic theory, the approximation systems \eqref{approxnondiv} and \eqref{approxdiv} admit unique smooth solutions $u_{\epsilon}(t,x)\in H^{\infty}_{t,x}$. Our task is to derive uniform-in-$\epsilon$ \emph{prioir} estimates for the approximation systems \eqref{approxnondiv} and \eqref{approxdiv}. Before this, we need a useful estimate, which implies that the high-frequency smallness of the approximating coefficient $a^{\epsilon}(t,x)$ is still preserved.
 
 \begin{lemma}\label{smallkeep}
 Assume $a(t,x)$ satisfies the assumptions in Theorem \ref{main} and Theorem \ref{maintwo}, then there exist a universal constant $C_{0}>0$ and a new frequency borderline $K>N+10$ (independent with $\epsilon$), such that
 \begin{equation}
 \||\p_{x}|^{s}(\mathrm{Id}-S_{K})a^{\epsilon}(t,x)\|_{L^{\frac{1}{s},\infty}_{x}}\leq C_{0}\delta_{0},\ \forall t\in[0,T].
 \end{equation}
 \end{lemma}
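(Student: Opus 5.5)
I will write $b^{\epsilon}:=\tilde\eta_{\epsilon}*_{t,x}a$ and $a^{\epsilon}=\chi_{\epsilon}(b^{\epsilon})$. The lemma is proved in two steps. First, the mollification preserves the high-frequency smallness. Second, composition with the Lipschitz truncation $\chi_{\epsilon}$ preserves it up to a universal constant, provided the cutoff frequency is raised from $N$ to some $K$.

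\textbf{Step 1 (mollification).} Convolution in $t$ with a probability density, and in $x$ with $\eta_{\epsilon}$, commutes with the Fourier multipliers $|\p_x|^s(\mathrm{Id}-S_N)$. It is also pointwise dominated by the Hardy--Littlewood maximal function via Lemma \ref{convolution}. Minkowski's inequality in $t$ (using the normability of $L^{1/s,\infty}$ for $1/s>2>1$) together with the boundedness of $\mathcal M$ on $L^{1/s,\infty}$ gives
\[
\sup_t\||\p_x|^s(\mathrm{Id}-S_N)b^\epsilon\|_{L^{1/s,\infty}}\lesssim\delta_0 .
\]
By the same argument, \eqref{regularityassume} passes to $b^\epsilon$ uniformly in $\epsilon$.

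\textbf{Step 2 (truncation).} I would decompose $b^\epsilon=S_N b^\epsilon+(\mathrm{Id}-S_N)b^\epsilon=:L+H$. Since $\chi_\epsilon$ is $1$-Lipschitz (it can be taken smooth with $|\chi_\epsilon'|\le1$ and $|\chi_\epsilon''|\lesssim\epsilon$), write
\[
\chi_\epsilon(L+H)=\chi_\epsilon(L)+H\int_0^1\chi_\epsilon'(L+\theta H)\,d\theta .
\]
For the second term I need a fractional Leibniz/chain rule estimate in the critical Lorentz space. Using the Littlewood--Paley characterization (Lemma \ref{lpchar}), dyadic blocks of this term at frequency $2^q$ with $q>K$ are controlled by the square function of $2^{qs}\Delta_q H$, via Bony's decomposition \eqref{project} and the Fefferman--Stein inequality Lemma \ref{fs}, using $|\chi_\epsilon'|\le1$. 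This yields $\lesssim\delta_0$. The remainder terms, where the derivative falls on the factor $\chi_\epsilon'(\cdots)$, must also be handled. They are bounded by the same smallness times quantities that are bounded by \eqref{regularityassume}. The point is that only $\||\p_x|^s\cdot\|_{L^{1/s,\infty}}$-type seminorms appear, and these are scale invariant, so no $\epsilon$-dependence enters. For the low-frequency piece $\chi_\epsilon(L)$, $L$ is smooth with $\p_xL\in L^{1/s,\infty}$ (by \eqref{regularityassume} and Bernstein, Lemma \ref{bernstein}). Hence $\p_x\chi_\epsilon(L)=\chi_\epsilon'(L)\p_xL$ lies in $L^{1/s,\infty}$ with bound $\lesssim1$. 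Then
\[
\||\p_x|^s(\mathrm{Id}-S_K)\chi_\epsilon(L)\|_{L^{1/s,\infty}}\lesssim 2^{-K(1-s)}\|\p_x\chi_\epsilon(L)\|_{L^{1/s,\infty}} ,
\]
and choosing $K$ large (depending only on the constants in \eqref{regularityassume}, $N$, $s$ and $\delta_0$) makes this $\le\delta_0$.

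\textbf{Main obstacle.} The main obstacle is the high-frequency piece of the composition. Composition with a merely Lipschitz map does not preserve $\dot W^{s,p}$ smallness through naive paralinearization constants. To prove the needed bound
\[
\||\p_x|^s(\mathrm{Id}-S_K)[F(b)-F(L)]\|_{L^{1/s,\infty}}\lesssim \||\p_x|^s H\|_{L^{1/s,\infty}}
\]
for $0<s<1$, I would first try the difference characterization of $\dot W^{s,p}$. In $L^p$ this is the square-function formula
\[
\Big(\int|u(x+h)-u(x)|^2|h|^{-1-2s}dh\Big)^{1/2}\in L^{p},
\]
valid for $2<p<\infty$, and I would extend it to $L^{p,\infty}$ by real interpolation. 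This characterization is monotone under Lipschitz maps, since $|F(b)(x+h)-F(b)(x)|\le|b(x+h)-b(x)|$. It then only remains to split $b=L+H$ inside the differences and absorb the $L$-contribution via its gradient bound as above.
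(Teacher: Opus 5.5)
Your Step 1 is fine; it even supplies the bound $\||\p_x|^s(\mathrm{Id}-S_N)\bar a_\epsilon\|_{L^{1/s,\infty}}\lesssim\delta_0$ that the paper uses in $I_3$ without comment. Step 2, however, has a genuine gap exactly where you defer the work. The Taylor/Bony line is not an argument. The terms $T_HG$ and $R(H,G)$, with $G=\int_0^1\chi_\epsilon'(L+\theta H)\,d\theta$, require fractional regularity of $G$ that you do not have uniformly in $\epsilon$; estimating it is the same composition problem again. Moreover $H$ is only in $BMO$, so the claim that these terms are ``bounded by the same smallness'' is unsupported. Your displayed target $\||\p_x|^s(\mathrm{Id}-S_K)[F(b)-F(L)]\|\lesssim\||\p_x|^sH\|$ is also not what a difference argument gives: increments of $F(b)-F(L)$ are controlled only by $|H(x+h)-H(x)|+2|L(x+h)-L(x)|$.

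More seriously, the difference route as you state it does not close. Set $\mathcal D_sG(x)=(\int|G(x-h)-G(x)|^2|h|^{-1-2s}\,dh)^{1/2}$. The global characterization $\||\p_x|^sG\|_{L^{p,\infty}}\sim\|\mathcal D_sG\|_{L^{p,\infty}}$, applied to $G=a^\epsilon$, sees all of $|\p_x|^sa^\epsilon$ rather than its part above $2^K$. After $|a^\epsilon(x-h)-a^\epsilon(x)|\le|L(x-h)-L(x)|+|H(x-h)-H(x)|$, the $L$-contribution from $|h|\gtrsim1$ is neither small nor, in general, finite. Indeed (H3) allows $a=\langle x\rangle^{1-s}$, for which $\int_{|h|>1}|h|^{2-2s}|h|^{-1-2s}\,dh=\infty$ because $s<\frac12$. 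The gradient bound $|L(x-h)-L(x)|\le2|h|\mathcal M(\p_xL)(x)$ only pays off for $|h|\lesssim2^{-K}$.

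What is missing is a frequency-localized, one-sided inequality, and it is elementary. Write $\check\varphi_j(h)=2^j\check\varphi(2^jh)$ for the kernel of $\Delta_j$. Using $\int\check\varphi_j=0$ and Cauchy--Schwarz, one gets pointwise, for any fixed $M>3$,
\[
\sum_{j\ge K}2^{2js}|\Delta_jG(x)|^2\lesssim\int_{\R}|G(x-h)-G(x)|^2k_K(h)\,dh,\qquad k_K(h):=\sum_{j\ge K}2^{2js}|\check\varphi_j(h)|\lesssim\min\big\{|h|^{-1-2s},\,2^{K(1+2s)}(2^K|h|)^{-M}\big\}.
\]
Since $\int|h|^2k_K(h)\,dh\lesssim2^{-2K(1-s)}$ and $k_K(h)\lesssim|h|^{-1-2s}$, the Lipschitz property of $\chi_\epsilon$ yields
\[
\Big(\sum_{j\ge K}2^{2js}|\Delta_ja^\epsilon|^2\Big)^{1/2}\lesssim2^{-K(1-s)}\mathcal M(\p_xL)+\mathcal D_sH.
\]
The argument then closes once $2^{(N-K)(1-s)}\le\delta_0$, using:
\begin{itemize}
\item Lemma \ref{lpchar} and Lemma \ref{fs};
\item the bound $\|\p_xL\|_{L^{1/s,\infty}}\lesssim2^{N(1-s)}$;
\item Stein's bound $\|\mathcal D_sH\|_{L^{1/s,\infty}}\lesssim\||\p_x|^sH\|_{L^{1/s,\infty}}$, which is the only place where $\frac1s>2$ and real interpolation enter.
\end{itemize}
With this, the separate treatment of $\chi_\epsilon(L)$ becomes unnecessary, and only Lipschitz continuity of $\chi_\epsilon$ is used.

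For comparison, the paper runs the same Lipschitz idea block by block, with a $j$-dependent cut. For $j>N+10$ it splits $\bar a_\epsilon=S_j\bar a_\epsilon+(\mathrm{Id}-S_j)\bar a_\epsilon$ and bounds $|\Delta_j(\chi_\epsilon\circ S_j\bar a_\epsilon)|\lesssim2^{-j}\mathcal M(\p_xS_j\bar a_\epsilon)$ and $|\Delta_j(\chi_\epsilon\circ\bar a_\epsilon-\chi_\epsilon\circ S_j\bar a_\epsilon)|\lesssim\sum_{k\ge j}\mathcal M(\Delta_k\bar a_\epsilon)$. It then sums via Schur's test and Fefferman--Stein. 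That stays within the preliminaries of Section \ref{pre} and needs no difference characterization in $L^{1/s,\infty}$. The repaired version of your route uses one fixed cut at $N$ and turns the composition step into a single pointwise inequality, at the cost of importing Stein's characterization.
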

\begin{proof}Let $\bar{a}_{\epsilon}(t,x):=(\tilde{\eta}_{\epsilon}*_{t,x}a)(t,x)$, then $a^{\epsilon}(t,x)=\chi_{\epsilon}(\bar{a}_{\epsilon}(t,x))$.
For $j>N+10$, notice that
\begin{equation}
\begin{split}
|\Delta_{j}(\chi_{\epsilon}\circ \bar{a}_{\epsilon})|&\leq |\Delta_{j}(\chi_{\epsilon}\circ S_{j}\bar{a}_{\epsilon})|+|\Delta_{j}(\chi_{\epsilon}\circ \bar{a}_{\epsilon}-\chi_{\epsilon}\circ S_{j}\bar{a}_{\epsilon})|\\
&\lesssim 2^{-j}\mathcal{M}\left(\chi_{\epsilon}'\circ S_{j}\bar{a}_{\epsilon}\cdot \p_{x}S_{j}\bar{a}_{\epsilon}\right)+\sum_{k\geq j}\mathcal{M}(\Delta_{k}\bar{a}_{\epsilon})\\
&\lesssim 2^{-j}\mathcal{M}(\p_{x}S_{2}\bar{a}_{\epsilon})+\sum_{2\leq k <N+2} 2^{(k-j)} \mathcal{M}^{(2)}(\Delta_{k}\bar{a}_{\epsilon})+\sum_{k\geq N+2}\min\{2^{k-j},1\}\mathcal{M}(\Delta_{k}\bar{a}_{\epsilon})\\\
&=:\sum_{l=1}^{3}\mathcal{A}_{j,l}^{\epsilon}(t,x),
\end{split}
\end{equation}
where $\mathcal{M}^{(2)}:=\mathcal{M}\circ \mathcal{M}$.

By the Littlewood-Paley characteristic for Lorentz space (Lemma \ref{lpchar}) and the Stein-Fefferman inequality (Lemma \ref{fs}), we have
\begin{equation}
\begin{split}
&\||\p_{x}|^{s}(\mathrm{Id}-S_{K})a^{\epsilon}\|_{L^{\frac{1}{s},\infty}_{x}}\lesssim\left\|\left(\sum_{j}|\Delta_{j}(|\p_{x}|^{s}(\mathrm{Id}-S_{K})a^{\epsilon})|^{2}\right)^{\frac{1}{2}}\right\|_{L^{\frac{1}{s},\infty}_{x}}\\
&\lesssim \left\|\left(\sum_{j\geq K-2}2^{2js}|\mathcal{M}(\Delta_{j}a^{\epsilon})|^{2}\right)^{\frac{1}{2}}\right\|_{L^{\frac{1}{s},\infty}_{x}}\lesssim \left\|\left(\sum_{j\geq K-2}2^{2js}|\Delta_{j}a^{\epsilon}|^{2}\right)^{\frac{1}{2}}\right\|_{L^{\frac{1}{s},\infty}_{x}}\\
&\lesssim \sum_{l=1}^{3}\left\|\left(\sum_{j\geq K-2}|2^{js}\mathcal{A}_{j,l}^{\epsilon}|^{2}\right)^{\frac{1}{2}}\right\|_{L^{\frac{1}{s},\infty}_{x}}=:I_{1}+I_{2}+I_{3}.
\end{split}
\end{equation}
\textit{Estimation of $I_{1}$.} Clearly,
\begin{equation}
I_{1}\lesssim 2^{(s-1)K}\|\mathcal{M}(\p_{x}S_{2}\bar{a}_{\epsilon})\|_{L^{\frac{1}{s},\infty}_{x}}\lesssim 2^{(s-1)K}\|S_{0}\p_{x}a\|_{L^{\frac{1}{s},\infty}_{x}}.
\end{equation}
Hence we can choose $K$ large enough such that $I_{1}\leq \delta_{0}$.\\
\textit{Estimation of $I_{2}$.} By H\"older's inequality, we have
\begin{equation}
\sum_{j\geq K-2}\left|2^{js}\sum_{2\leq k<N+2}2^{(k-j)}\mathcal{M}^{(2)}(\Delta_{k}\bar{a}_{\epsilon})\right|^{2}\lesssim 2^{2(1-s)(N-K)}\sum_{2\leq k<N+2}|2^{ks}\mathcal{M}^{(2)}(\Delta_{k}\bar{a}_{\epsilon})|^{2}.
\end{equation}
By applying Fefferman--Stein inequality twice,
\begin{equation}
I_{2}\lesssim 2^{(1-s)(N-K)}\||\p_{x}|^{s}(Id-S_{0})a\|_{L^{\frac{1}{s},\infty}_{x}}.
\end{equation}
We can also choose $K$ large enough such that $I_{2}\leq \delta_{0}$.\\
\textit{Estimation of $I_{3}$.}
Notice that
\begin{equation}
\sum_{j\geq K-2}|2^{js}\mathcal{A}_{j,3}^{\epsilon}|^{2}\lesssim \sum_{j\geq K-2}\left|\sum_{k\geq N+2}\min \{2^{(1-s)(k-j)},2^{-s(k-j)}\}2^{ks}M(\Delta_{k}\bar{a}^{\epsilon})\right|^{2}.
\end{equation}
By Schur's test, 
\begin{equation}
\left(\sum_{j\geq K-2}|2^{js}\mathcal{A}_{j,3}^{\epsilon}|^{2}\right)^{\frac{1}{2}}\lesssim \left(\sum_{k\geq N+2}|2^{ks}M(\Delta_{k}\bar{a}^{\epsilon})|^{2}\right)^{\frac{1}{2}}.
\end{equation}
Hence, by virtue of the Stein-Fefferman inequality,
\begin{equation}
I_{3}\lesssim \||\p_{x}|^{s}(\mathrm{Id}-S_{N})\bar{a}^{\epsilon}\|_{L^{\frac{1}{s},\infty}_{x}}\lesssim \delta_{0}.
\end{equation}
Hence we conclude Lemma \ref{smallkeep} by summarizng above estimates.
\end{proof}

\section{Priori estimates for approximation systems}\label{prioriesti}
\subsection{Priori estimates for \eqref{approxnondiv}}
First we state the uniform-in-$\epsilon$ estimates for the non-divergence form approximation system \eqref{approxnondiv}. Throughout this subsection we assume $u_{0}(x),f(t,x),a(t,x)$ satisfy the assumptions in Theorem \ref{main}.
\begin{proposition}[\bf Energy Estimate]\label{energy}
Let $u^{\epsilon}$ be the classical solution of \eqref{approxnondiv}, then we have
\begin{equation}
\sup_{0\leq t\leq T}\int_{\R}|\p_{x}u^{\epsilon}(t,x)|^{2}dx+\int_{0}^{T}\int_{\R}|\p_{x}^{2}u^{\epsilon}(t,x)|^{2}dxdt\lesssim \|u_{0}(x)\|_{\dot{H}^{1}(\R)}^{2}+\|f\|_{L^{2}_{t,x}([0,T]\times\R)}^{2}+o_{\epsilon}(1).
\end{equation}
\end{proposition}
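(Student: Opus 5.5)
Test the equation \eqref{approxnondiv} against $-\p_{x}^{2}u^{\epsilon}$ and integrate over $\R$; this is legitimate because $u^{\epsilon}\in H^{\infty}_{t,x}$ is a classical solution decaying at infinity. Integrating by parts in the time derivative term gives
\begin{equation*}
\frac{1}{2}\frac{d}{dt}\int_{\R}|\p_{x}u^{\epsilon}|^{2}dx+\int_{\R}a^{\epsilon}|\p_{x}^{2}u^{\epsilon}|^{2}dx=-\int_{\R}f^{\epsilon}\,\p_{x}^{2}u^{\epsilon}dx.
\end{equation*}
The key observation is that in non-divergence form no derivative falls on $a^{\epsilon}$. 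Hence only a pointwise lower bound on $a^{\epsilon}$ is needed, not any regularity of it.

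To get that lower bound, I would use (H1). Since $a\geq c_{0}$ and the mollifier $\tilde{\eta}_{\epsilon}$ is nonnegative with unit mass, $\bar a_{\epsilon}\geq c_{0}$. The truncation $\chi_{\epsilon}$ is the identity on $[c_{0},\epsilon^{-1}]$ and equals $2\epsilon^{-1}$ beyond $2\epsilon^{-1}$. I would choose $\chi_{\epsilon}$ monotone nondecreasing in the transition region $[\epsilon^{-1},2\epsilon^{-1}]$, which is consistent with the definition. Then $a^{\epsilon}\geq \min\{c_{0},\epsilon^{-1}\}=c_{0}$ for $\epsilon\le 1/10$ (assuming $c_0\le 10$ without loss), so
\begin{equation*}
\int_{\R}a^{\epsilon}|\p_{x}^{2}u^{\epsilon}|^{2}dx\geq c_{0}\|\p_{x}^{2}u^{\epsilon}\|_{L^{2}}^{2}.
\end{equation*}

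For the forcing term, Young's inequality gives
\begin{equation*}
\left|\int_{\R} f^{\epsilon}\p_{x}^{2}u^{\epsilon}dx\right|\le \frac{c_{0}}{2}\|\p_{x}^{2}u^{\epsilon}\|_{L^{2}}^{2}+\frac{1}{2c_{0}}\|f^{\epsilon}\|_{L^{2}}^{2}.
\end{equation*}
The first piece is absorbed into the left-hand side. Integrating in time over $[0,t]$ and taking the supremum over $t\in[0,T]$ then yields
\begin{equation*}
\sup_{t}\|\p_{x}u^{\epsilon}(t)\|_{L^{2}}^{2}+c_{0}\int_{0}^{T}\|\p_{x}^{2}u^{\epsilon}\|_{L^{2}}^{2}dt\lesssim_{c_{0}}\|u_{0}^{\epsilon}\|_{\dot H^{1}}^{2}+\|f^{\epsilon}\|_{L^{2}_{t,x}}^{2}.
\end{equation*}

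To finish, I would apply Lemma \ref{mollifierbound} with $r=1$ and $t=0$, which replaces the right-hand side by $\|u_{0}\|_{\dot H^{1}}^{2}+\|f\|_{L^{2}_{t,x}}^{2}+o_{\epsilon}(1)$. The squared $o_{\epsilon}(1)$ remainders and the cross terms remain $o_{\epsilon}(1)$, since the main norms are finite. The only delicate point I expect is justifying the uniform ellipticity $a^{\epsilon}\ge c_{0}$ after truncation, i.e.\ that $\chi_{\epsilon}$ does not dip below $c_{0}$. This is the reason for fixing a monotone $\chi_{\epsilon}$. Everything else is the standard heat-equation energy identity.
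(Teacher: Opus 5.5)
Your proposal is correct and is essentially the paper's proof: test \eqref{approxnondiv} against $-\p_{x}^{2}u^{\epsilon}$, use $a^{\epsilon}\geq c_{0}$, absorb the forcing term, and finish with Lemma \ref{mollifierbound}. Your explicit check that the mollified-and-truncated coefficient stays above $c_{0}$ (monotone $\chi_{\epsilon}$ and $\epsilon^{-1}\geq c_{0}$) fills in a point the paper only asserts.
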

\begin{proof}Testing the equation \eqref{approxnondiv} by $-\p_{x}^{2}u^{\epsilon}$, we have
\begin{equation*}
\int_{\R}\p_{t}u^{\epsilon}(-\p_{x}^{2}u^{\epsilon})dx+\int_{\R}a^{\epsilon}(t,x)|\p_{x}^{2}u^{\epsilon}|^{2}dx=-\int_{\R}f^{\epsilon}\cdot \p_{x}^{2}u^{\epsilon}dx.
\end{equation*}
Integrating by parts,
\begin{equation*}
\int_{\R}\p_{t}u^{\epsilon}(-\p_{x}^{2}u^{\epsilon})dx=\frac{d}{dt}\frac{1}{2}\int_{\R}|\p_{x}u^{\epsilon}|^{2}dx.
\end{equation*}
Hence, for $t\in[0,T]$,
\begin{equation}
\frac{1}{2}\int_{\R}|\p_{x}u^{\epsilon}(t,x)|^{2}dx+\int_{0}^{t}\int_{\R}a^{\epsilon}(t,x)|\p_{x}^{2}u^{\epsilon}(t,x)|^{2}dxdt\leq \frac{1}{2}\|\p_{x}u_{0}^{\epsilon}\|_{L^{2}(\R)}^{2}+\|f^{\epsilon}\|_{L^{2}_{t,x}}\|\p_{x}^{2}u^{\epsilon}\|_{L^{2}_{t,x}}.
\end{equation}
We complete the proof of Proposition \ref{energy} by noting $a^{\epsilon}(t,x)\geq c_{0}$ and employing Lemma \ref{mollifierbound}.
\end{proof}
Next, we state the (non-local) higher order energy estimate.
\begin{proposition}[\bf High-Order Energy Estimate]\label{high2}
Let $u^{\epsilon}$ be the classical solution of \eqref{approxnondiv}, then there exists a universal constant $\delta_{0}>0$ such that under the assumptions in Theorem \ref{main} we have
\begin{equation*}
\sup_{0\leq t \leq T}\|\p_{x}u^{\epsilon}(t,x)\|_{H^{s}(\R)}^{2}+\int_{0}^{T}\|\p_{x}^{2}u^{\epsilon}\|_{H^{s}(\R)}^{2}dt\lesssim \|u_{0}\|_{\dot{H}^{1}\cap \dot{H}^{1+s}(\R)}^{2}+\|f\|_{L^{2}_{t}H^{s}_{x}}^{2}+o_{\epsilon}(1).
\end{equation*}
\end{proposition}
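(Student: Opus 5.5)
Since Proposition \ref{energy} already controls $\sup_t\|\p_x u^\epsilon\|_{L^2}^2+\int_0^T\|\p_x^2u^\epsilon\|_{L^2}^2dt$, I only need the homogeneous $\dot H^s$ part, and only at frequencies above a large threshold $\Lambda$. The low frequencies $S_\Lambda$ are controlled by the $L^2$ energy at the cost of a factor $2^{\Lambda s}$. Set $v:=S_{>\Lambda}|\p_x|^{s}\p_xu^\epsilon$, where $\Lambda\ge K+10$ and $K$ is the frequency from Lemma \ref{smallkeep}.

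\textbf{Energy identity.} I apply $S_{>\Lambda}|\p_x|^{s}$ to \eqref{approxnondiv}, test with $-\p_x v=-S_{>\Lambda}|\p_x|^{s}\p_x^2u^\epsilon$, and integrate by parts in time. This gives
\begin{equation*}
\frac12\frac{d}{dt}\|v\|_{L^2}^2+\int_{\R}a^\epsilon|\p_xv|^2dx=-\int_{\R}S_{>\Lambda}|\p_x|^sf^\epsilon\,\p_xv\,dx-\int_{\R}\bigl([S_{>\Lambda}|\p_x|^s,a^\epsilon]\p_x^2u^\epsilon\bigr)\p_xv\,dx .
\end{equation*}
By (H1), $a^\epsilon\ge c_0$, so the left side dominates $c_0\|\p_xv\|_{L^2}^2$. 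The force term is handled by Young's inequality together with Lemma \ref{mollifierbound}. Everything then reduces to the commutator \eqref{commut}.

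\textbf{Commutator decomposition.} I split $a^\epsilon=S_Ka^\epsilon+(\mathrm{Id}-S_K)a^\epsilon$.

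For the low part $S_Ka^\epsilon$, the function is smooth with $\|\p_xS_Ka^\epsilon\|_{L^{1/s,\infty}}\lesssim 2^{K(1-s)}$, uniformly in $\epsilon$. This follows from (H3), the Fefferman--Stein argument used in Lemma \ref{smallkeep}, and Bernstein (Lemma \ref{bernstein}). The commutator gains one derivative, so I expect a bound of the form
\begin{equation*}
C_K\|\p_x^2u^\epsilon\|_{H^{s-}}\|\p_xv\|_{L^2}\le \tfrac{c_0}{8}\|\p_xv\|_{L^2}^2+C_K\|\p_x^2u^\epsilon\|_{L^2}^2+\text{(interpolation)}.
\end{equation*}
The last terms are then absorbed using Proposition \ref{energy} after interpolating between $L^2$ and $\dot H^s$.

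For the high part $h:=(\mathrm{Id}-S_K)a^\epsilon$, I use Bony's decomposition \eqref{bony} and \eqref{project} on $[S_{>\Lambda}|\p_x|^s,h]\p_x^2u^\epsilon$, with $w:=\p_x^2u^\epsilon$.
\begin{itemize}
\item[(a)] \emph{Low--high term} $S_{j-1}h\,\Delta_jw$. The commutator symbol $|\xi+\eta|^s-|\eta|^s$, with $|\xi|\ll|\eta|$, has Coifman--Meyer form of size $|\xi|\,|\eta|^{s-1}$. Writing it as a bilinear operator in $(|\p_x|^s h,\ |\p_x|^{-s}\cdot\,|\p_x|^{s}w)$, Lemma \ref{cmbilinear} gives a bound by $\|w\|_{\dot H^s}$ times a BMO-type norm of a quantity built from $|\p_x|^sh$ at scaling zero. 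I control that norm through the refined Sobolev-to-BMO embedding, taking $L^{1/s,\infty}$ at $s$ derivatives into BMO at zero derivatives, combined with Fefferman--Stein. The result is $\lesssim\delta_0$ by Lemma \ref{smallkeep}.
\item[(b)] \emph{High--low term} $\Delta_jh\,S_{j-1}w$. I put $\Delta_jh$ in $L^{1/s,\infty}$ with weight $2^{js}$, and put $S_{j-1}w$ in $L^{2/(1-2s),2}$ via Sobolev embedding of $\dot H^s$. Hölder's inequality in Lorentz spaces, Fefferman--Stein (Lemma \ref{fs}) and Schur's test then give $\lesssim\delta_0\|w\|_{\dot H^s}\|\p_xv\|_{L^2}$. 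This step uses $s<\frac12$.
\item[(c)] \emph{High--high term.} This is handled the same way, with frequency summation again using $s<\frac12$ (or $s>0$).
\end{itemize}

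\textbf{Closing.} Since $\|w\|_{\dot H^s}\lesssim\|\p_xv\|_{L^2}+2^{\Lambda s}\|w\|_{L^2}$, collecting the bounds gives
\begin{equation*}
\frac{d}{dt}\|v\|^2+c_0\|\p_xv\|^2\le C\delta_0\|\p_xv\|^2+C_{K,\Lambda}\bigl(\|\p_x^2u^\epsilon\|_{L^2}^2+\|f^\epsilon\|_{H^s}^2\bigr).
\end{equation*}
Choosing $\delta_0\ll c_0$ to absorb the first term, then integrating in time and invoking Proposition \ref{energy} and Lemma \ref{mollifierbound}, yields the claim.

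\textbf{Main obstacle.} I expect term (a) to be the hardest. It is the genuinely critical interaction, and it requires a BMO-type bound on the coefficient coming only from the weak-type $L^{1/s,\infty}$ control. That is exactly where the refined Sobolev-to-BMO lemma and the Coifman--Meyer estimate must combine. My first attempt will be to write the symbol as $|\xi|^s\cdot m(\xi,\eta)|\eta|^{s}$ with $m$ of order $-s$ (in $\eta$ relative to $\xi$), and to check the Coifman--Meyer hypotheses on $m$ dyadically.
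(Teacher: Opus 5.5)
Your argument is correct in outline. It uses the same toolbox as the paper but organizes it differently, and it spends the heavy BMO tool on a different interaction.

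\textbf{How the paper proceeds.} The paper never forms the global commutator $[S_{>\Lambda}|\partial_x|^s,a^\epsilon]$, and it does not split $a^\epsilon$ at frequency $2^K$ in advance. For each $j\ge K+L$ it writes the $\Delta_j$-localized equation \eqref{commutator}, with $a^\epsilon\partial_x^2\Delta_ju^\epsilon$ on the left and remainders $M_{1,j},\dots,M_{5,j}$ on the right. It then tests with $-2^{2js}\partial_x^2\Delta_ju^\epsilon$ and sums in $j$.
\begin{itemize}
\item The low--high commutator $[\Delta_j,S_{j-6}a^\epsilon]\Delta_{j+l}\partial_x^2u^\epsilon$ is bounded by $2^{-j}\|\partial_xS_{j-6}a^\epsilon\|_{L^\infty}$ times $\|\Delta_{j+l}\partial_x^2u^\epsilon\|_{L^2}$. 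Bernstein gives $\|\partial_xS_{j-6}a^\epsilon\|_{L^\infty}\lesssim 2^K+2^j\delta_0$, hence the factor $2^{-L}+\delta_0$.
\item The pieces $M_{1},M_{3},M_{4}$ are handled by Lorentz--H\"older, Bernstein, maximal functions and $H^s\hookrightarrow L^{\frac{2}{1-2s},2}$, much as in your (b).
\item Coifman--Meyer together with Lemma \ref{refinesobo} is used only for the high--high term $M_{5,j}$.
\end{itemize}

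\textbf{How your route differs.} You make the reverse assignment: BMO/Coifman--Meyer for the low--high commutator, H\"older for the high--high term. Both assignments close.
\begin{itemize}
\item In (a), the order-zero symbol $(|\xi+\eta|^s-|\eta|^s)|\eta|^{-s}$ acts on $(h,|\partial_x|^sw)$ and vanishes at $\xi=0$. So Lemma \ref{cmbilinear} applies with $[h]_{BMO}\lesssim\delta_0$.
\item In (c), pair against $g\in L^2$ and use $\bigl|\sum_{j\le k+2}2^{js}\Delta_jg\bigr|\lesssim 2^{ks}\mathcal{M}g$, which needs $s>0$. This reduces the term to $\int\sum_k2^{ks}|\Delta_kh||\Delta_kw|\,\mathcal{M}g$. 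Lorentz--H\"older with exponents $\frac1s,\frac{2}{1-2s},2$ then finishes, so Coifman--Meyer is not needed there.
\end{itemize}
Your single global energy identity is shorter and shows that the high--high interaction needs no bilinear multiplier theory. The paper's dyadic scheme makes the one-derivative gain of the low--high commutator explicit block by block and keeps that term elementary.

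\textbf{On your ``main obstacle''.} You overrate (a). Because of the gain $2^{k-j}$, it only needs $\sup_k\|\Delta_kh\|_{L^\infty}\lesssim\sup_k\||\partial_x|^s\Delta_kh\|_{L^{\frac1s,\infty}}\lesssim\delta_0$. That is Bernstein, not BMO.

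The variant in your last paragraph does not work as stated. A symbol homogeneous of degree $-s$ is not of Coifman--Meyer type. Moreover, $|\partial_x|^sh$ lies only in $L^{\frac1s,\infty}$, not in BMO. That version would have to be run as your (b) argument (H\"older plus $\dot H^s\hookrightarrow L^{\frac{2}{1-2s},2}$), not through Lemma \ref{cmbilinear}.
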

\begin{proof}
To prove Proposition \ref{high2}, we need to employ the Littlewood-Paley theory presented in Section 2. For $j\geq K+L$ ($K$ is the frequency borderline in Lemma \ref{smallkeep}, and $L\geq 10$ will be chosen later), acting the projection operator $\Delta_{j}$ on the equation \eqref{approxnondiv}, we obtain
\begin{equation*}
\p_{t}\Delta_{j}u^{\epsilon}-\Delta_{j}\left(a^{\epsilon}(t,x)\p_{x}^{2}u^{\epsilon}\right)=\Delta_{j}f^{\epsilon}(t,x).
\end{equation*}
Applying Bony's paraproduct decomposition \eqref{project},
\begin{equation}\label{com1}
\begin{split}
\Delta_{j}\left(a^{\epsilon}\p_{x}^{2}u^{\epsilon}\right)&=\sum_{|l|\leq5}\Delta_{j}\left(S_{j+l-1}a^{\epsilon}\Delta_{j+l}\p_{x}^{2}u^{\epsilon}\right)+\sum_{|l|\leq5} \Delta_{j}\left(S_{j+l-1}\p_{x}^{2}u^{\epsilon}\Delta_{j+l}a^{\epsilon}\right)\\
&\quad\quad +\sum_{k\geq j-5,|q-k|\leq 1}\Delta_{j}\left(\Delta_{k}a^{\epsilon}\Delta_{q}\p_{x}^{2}u^{\epsilon}\right)
\end{split}
\end{equation}
By virtue of \eqref{com1}, we obtain the equation of $\Delta_{j}u^{\epsilon}$ by the following calculation of commutator:
\begin{equation}\label{commutator}
\begin{split}
&\p_{t}\Delta_{j}u^{\epsilon}-a^{\epsilon}(t,x)\p_{x}^{2}\Delta_{j}u^{\epsilon}=\Delta_{j}f^{\epsilon}+\Delta_{j}\left(a^{\epsilon}\p_{x}^{2}u^{\epsilon}\right)-a^{\epsilon}(t,x)\p_{x}^{2}\Delta_{j}u^{\epsilon}\\
&\quad=\Delta_{j}f^{\epsilon}+\sum_{|l|\leq5}\Delta_{j}\left(S_{j+l-1}a^{\epsilon}\Delta_{j+l}\p_{x}^{2}u^{\epsilon}\right)-a^{\epsilon}(t,x)\p_{x}^{2}\Delta_{j}u^{\epsilon}\\
&\quad\quad +\sum_{|l|\leq5} \Delta_{j}\left(S_{j+l-1}\p_{x}^{2}u^{\epsilon}\Delta_{j+l}a^{\epsilon}\right)+\sum_{k\geq j-5,|q-k|\leq 1}\Delta_{j}\left(\Delta_{k}a^{\epsilon}\Delta_{q}\p_{x}^{2}u^{\epsilon}\right)\\
&\quad =\Delta_{j}f^{\epsilon}+\sum_{|l|\leq5}\Delta_{j}\left(S_{j-6}a^{\epsilon}\Delta_{j+l}\p_{x}^{2}u^{\epsilon}\right)-a^{\epsilon}(t,x)\p_{x}^{2}\Delta_{j}u^{\epsilon}\\
&\quad\quad +\sum_{|l|\leq5} \Delta_{j}\left(S_{j+l-1}\p_{x}^{2}u^{\epsilon}\Delta_{j+l}a^{\epsilon}\right)+\sum_{-5<l\leq 5}\sum_{q=-6}^{l-2}\Delta_{j}\left(\Delta_{j+q}a^{\epsilon}\Delta_{j+l}\p_{x}^{2}u^{\epsilon}\right)\\
&\quad\quad+\sum_{k\geq j-5,|q-k|\leq 1}\Delta_{j}\left(\Delta_{k}a^{\epsilon}\Delta_{q}\p_{x}^{2}u^{\epsilon}\right)\\
&\quad =\Delta_{j}f^{\epsilon}+(S_{j-6}-\mathrm{Id})a^{\epsilon}\Delta_{j}\p_{x}^{2}u^{\epsilon}+\sum_{|l|\leq 5}[\Delta_{j}, S_{j-6}a^{\epsilon}]\Delta_{j+l}\p_{x}^{2}u^{\epsilon}\\
&\quad\quad +\sum_{|l|\leq5} \Delta_{j}\left(S_{j+l-1}\p_{x}^{2}u^{\epsilon}\Delta_{j+l}a^{\epsilon}\right)+\sum_{-5<l\leq 5}\sum_{q=-6}^{l-2}\Delta_{j}\left(\Delta_{j+q}a^{\epsilon}\Delta_{j+l}\p_{x}^{2}u^{\epsilon}\right)\\
&\quad\quad+\sum_{k\geq j-5,|q-k|\leq 1}\Delta_{j}\left(\Delta_{k}a^{\epsilon}\Delta_{q}\p_{x}^{2}u^{\epsilon}\right)\\
&\quad =:F_{j}+M_{1,j}+M_{2,j}+M_{3,j}+M_{4,j}+M_{5,j}.
\end{split}
\end{equation}
Testing \eqref{commutator} by $-2^{2js}\partial_{x}^{2}\Delta_{j}u^{\epsilon}$, and integrating by parts, we then obtain that
\begin{equation}\label{jj}
\begin{split}
&\frac{1}{2}\left[2^{2js}\int_{\R}|\p_{x}\Delta_{j}u^{\epsilon}|^{2}dx\right]'+2^{2js}\int_{\R}a^{\epsilon}(t,x)|\p_{x}^{2}\Delta_{j}u^{\epsilon}|^{2}dx\\
&\quad\quad =-2^{2js}\int_{\R}\p_{x}^{2}\Delta_{j}u^{\epsilon}(F_{j}+M_{1,j}+M_{2,j}+M_{3,j}+M_{4,j}+M_{5,j})dx\\
&\quad\quad=:\mathcal{F}_{j}+\sum_{l=1}^{5}\mathcal{G}_{l,j}.
\end{split}
\end{equation}
Denote
\begin{equation}
\mathcal{N}_{\Lambda,\epsilon}(t):=\sum_{j\geq \Lambda}2^{2js}\|\p_{x}^{2}\Delta_{j}u^{\epsilon}\|_{L^{2}_{x}}^{2}.
\end{equation}
It is obvious that
\begin{equation}\label{estifj}
\sum_{j\geq K+L}\mathcal{F}_{j}\leq \mathcal{N}_{K+L,\epsilon}^{\frac{1}{2}}(t)\cdot\|f^{\epsilon}\|_{H^{s}_{x}}.
\end{equation}
We next focus on the estimates of $\mathcal{G}_{l,j}$ for $i\in[1,5]\cap \mathbb{N}$.\\
\textit{Estimate of $\mathcal{G}_{1.j}$.} Choose
\begin{equation*}
s+\frac{1}{2}+\frac{1}{q}=1.
\end{equation*}
Applying H\"older's inequality and Bernstein's inequality (Lemma \ref{bernstein}),
\begin{equation}
\begin{split}
\mathcal{G}_{1,j}&\leq 2^{2js}\left\|(S_{j-6}-\mathrm{Id})a^{\epsilon}\right\|_{L^{\frac{1}{s},\infty}}\|\Delta_{j}\p_{x}^{2}u^{\epsilon}\|_{L^{2}}\|\Delta_{j}\p_{x}^{2}u^{\epsilon}\|_{L^{q,2}}\\
&\lesssim 2^{3js}\left(\sum_{k\geq j}\|\Delta_{k}a^{\epsilon}\|_{L^{\frac{1}{s},\infty}}\right)\|\Delta_{j}\p_{x}^{2}u^{\epsilon}\|_{L^{2}}^{2}\\
&\lesssim 2^{3js}\left(\sum_{k\geq j}2^{-sk}\||\p_{x}|^{s}(\mathrm{Id}-S_{K})a^{\epsilon}\|_{L^{\frac{1}{s},\infty}_{x}}\right)\|\Delta_{j}\p_{x}^{2}u^{\epsilon}\|_{L^{2}}^{2}\\
&\lesssim 2^{2js}\|\Delta_{j}\p_{x}^{2}u^{\epsilon}\|_{L^{2}}^{2}\cdot \||\p_{x}|^{s}(\mathrm{Id}-S_{K})a^{\epsilon}\|_{L^{\frac{1}{s},\infty}_{x}}.
\end{split}
\end{equation}
Therefore, by Lemma \ref{smallkeep},
\begin{equation}\label{estig1j}
\sum_{j\geq K+L}\mathcal{G}_{1,j}\lesssim \delta_{0}\cdot \mathcal{N}_{K}(t)
\end{equation}
\textit{Estimate of $\mathcal{G}_{2,j}$.} We shall need a useful commutator estimate (see Lemma 2.97 in \cite{BCD}): For $j\geq 0$,
\begin{equation}
\|[\Delta_{j},b]f\|_{L^{2}_{x}}\lesssim 2^{-j}\|\nabla_{x}b\|_{L^{\infty}_{x}}\cdot\|f\|_{L^{2}_{x}}.
\end{equation}
By virtue of \eqref{commutator}, we have
\begin{equation}
\begin{split}
|\mathcal{G}_{2,j}|&\leq \sum_{|l|\leq 5}2^{2js}\|\partial_{x}^{2}\Delta_{j}u^{\epsilon}\|_{L^{2}_{x}}\left\|[\Delta_{j}, S_{j-6}a^{\epsilon}]\Delta_{j+l}\p_{x}^{2}u^{\epsilon}\right\|_{L^{2}_{x}}\\
&\lesssim \sum_{|l|\leq 5}2^{(2s-1)j}\|\partial_{x}^{2}\Delta_{j}u^{\epsilon}\|_{L^{2}_{x}}\|\partial_{x}^{2}\Delta_{j+l}u^{\epsilon}\|_{L^{2}_{x}}\|\partial_{x} S_{j-6}a^{\epsilon}\|_{L^{\infty}_{x}}\\
\end{split}
\end{equation}
Applying Bernstein's inequality and Lemma \ref{smallkeep},
\begin{equation}
\begin{split}
\|\partial_x S_{j-6}a^\epsilon\|_{L^\infty_x}
&\le \|\partial_x S_0 a^\epsilon\|_{L^{\frac{1}{s},\infty}_x}
+\sum_{0\le k\le K+2}2^{k(1+s)}\|\Delta_k a^\epsilon\|_{L^{\frac{1}{s},\infty}_x}\\
&\quad +\sum_{K+2<k<j-6}2^{k(1+s)}\|\Delta_k a^\epsilon\|_{L^{\frac{1}{s},\infty}_x}\\
&\lesssim 2^K+2^j\delta_0.
\end{split}
\end{equation}
Hence,
\begin{equation}\label{estig2j}
\sum_{j\geq K+L}|\mathcal{G}_{2,j}|\lesssim  (2^{-L}+\delta_{0})\|\p_{x}^{2}u^{\epsilon}\|_{H^{s}_{x}}^{2}.
\end{equation}
\textit{Estimate of $\mathcal{G}_{3,j}$.} By Cauchy--Schwarz inequality,
\begin{align*}
\sum_{j\geq K+L}|\mathcal{G}_{3,j}|&\lesssim \sum_{|l|\leq 5}\int_{\R}\left(\sum_{j\geq K+L}2^{2js}|\Delta_{j+l}a^{\epsilon}|^{2}\right)^{\frac{1}{2}}\left(\sum_{j\geq K+L}2^{2js}|\Delta_{j}\p_{x}^{2}u^{\epsilon}S_{j+l-1}\p_{x}^{2}u^{\epsilon}|^{2}\right)^{\frac{1}{2}}dx.
\end{align*}
By virtue of H\"older's inequality, and the refined Sobolev embedding $H^{s}(\R)\hookrightarrow L^{\frac{2}{1-2s},2}(\R)$, we have
\begin{equation}\label{estig3j}
\begin{split}
\sum_{j\geq K+L}\mathcal{G}_{3,j}&\lesssim \left\|\left(\sum_{j\geq K+L-5}2^{2js}|\Delta_{j}a^{\epsilon}|^{2}\right)^{\frac{1}{2}}\right\|_{L^{\frac{1}{s},\infty}}\left\|\mathcal{M}(\p_{x}^{2}u^{\epsilon})\left(\sum_{j\geq K+L}2^{2js}|\Delta_{j}\p_{x}^{2}u^{\epsilon}|^{2}\right)\right\|_{L^{\frac{1}{1-s},1}}\\
&\lesssim \||\p_{x}|^{s}(\mathrm{Id}-S_{K})a^{\epsilon}\|_{L^{\frac{1}{s},\infty}_{x}}\|\mathcal{M}(\p_{x}^{2}u^{\epsilon})\|_{L^{\frac{2}{1-2s},2}_{x}}\left\|\left(\sum_{j\geq K+L}2^{2js}|\Delta_{j}\p_{x}^{2}u^{\epsilon}|^{2}\right)\right\|_{L^{2}_{x}}\\
&\lesssim \delta_{0}\|\p_{x}^{2}u^{\epsilon}\|_{H^{s}_{x}}^{2}
\end{split}
\end{equation}
\textit{Estimate of $\mathcal{G}_{4,j}$.} The estimate of $\mathcal{G}_{4,j}$ is directly. By Bernstein inequality we have
\begin{equation}\label{estig4j}
\begin{split}
\mathcal{G}_{4,j}&\lesssim \sup_{j\geq K+L-6}\|\Delta_{j}a^{\epsilon}\|_{L^{\infty}_{x}}\mathcal{N}_{K+L-5,\epsilon}(t)\\
&\lesssim \|(\mathrm{Id}-S_{K})a^{\epsilon}\|_{L^{\frac{1}{s},\infty}_{x}}\mathcal{N}_{K+L-5,\epsilon}(t)\lesssim \delta_{0}\|\p_{x}^{2}u^{\epsilon}\|_{H^{s}_{x}}^{2}.
\end{split}
\end{equation}
\textit{Estimate of $\mathcal{G}_{5,j}$.} The estimate of $\mathcal{G}_{5,j}$ is more involved. We claim that
\begin{equation}\label{claimg5j}
\left\|\sum_{j\geq K+L}2^{js}\cdot \sum_{k\geq j-5}\sum_{|q-k|\leq 1}\Delta_{k}a^{\epsilon}\Delta_{q}\p_{x}^{2}u^{\epsilon}\right\|_{L^{2}_{x}}\lesssim\delta_{0} \||\p_{x}|^{s}(\mathrm{Id}-S_{0})\p_{x}^{2}u^{\epsilon}\|_{L^{2}_{x}}.
\end{equation}
Once \eqref{claimg5j} is proved, we immediately obtain that
\begin{equation}\label{estig5j}
\begin{split}
\sum_{j\geq K+L}\mathcal{G}_{5,j}&\lesssim \||\p_{x}|^{s}(\mathrm{Id}-S_{0})\p_{x}^{2}u^{\epsilon}\|_{L^{2}_{x}}\cdot \left\|\sup_{j\geq K+L}(2^{js}\Delta_{j}^{(2)}\p_{x}^{2}u^{\epsilon})\right\|_{L^{2}_{x}}\\
&\lesssim \delta_{0}\|\p_{x}^{2}u^{\epsilon}\|_{H^{s}_{x}}^{2}.
\end{split}
\end{equation}
Indeed, we shall prove
\begin{equation}\label{claimg5jb}
\left\|\sum_{j\geq K+L}2^{js}\sum_{k\geq j-5}\sum_{|q-k|\leq 1}\Delta_{k}a^{\epsilon}\Delta_{q}\p_{x}^{2}u^{\epsilon}\right\|_{L^{2}_{x}}\lesssim[(\mathrm{Id}-S_{K})a^{\epsilon}]_{BMO_{x}} \||\p_{x}|^{s}(\mathrm{Id}-S_{0})\p_{x}^{2}u^{\epsilon}\|_{L^{2}_{x}}.
\end{equation}
Then \eqref{claimg5j} follows from \eqref{claimg5jb}, Lemma \ref{smallkeep}, and the following end-point Sobolev embedding Lemma \ref{refinesobo}.\\
\textit{Proof of \eqref{claimg5jb}.} For clarity, denote $a^{K,\epsilon}:=(\mathrm{Id}-S_{K})a^{\epsilon}$ and $\tilde{u}^{\epsilon}=|\p_{x}|^{s}(\mathrm{Id}-S_{0})u^{\epsilon}$. Notice that
\begin{equation}
\begin{split}
&\sum_{j\geq K+L}2^{js}\sum_{k\geq j-5}\sum_{|q-k|\leq 1}\Delta_{k}a^{\epsilon}\Delta_{q}\p_{x}^{2}u^{\epsilon}\\
&\quad=\int_{\R^{2d}}\sigma(\xi,\eta)\widehat{\p_{x}^{2}\tilde{u}^{\epsilon}}(t,\xi)\cdot \widehat{a^{K,\epsilon}}(t,\eta)e^{ix\cdot(\xi+\eta)}d\xi d\eta.
\end{split}
\end{equation}
 Where the kernel $\sigma(\cdot,\cdot)$ can be expressed by
 \begin{equation}
 \sigma(\xi,\eta)=\sum_{j\geq K+L}\frac{2^{js}}{|\xi|^{s}}\sum_{k\geq j-5}\sum_{|q-k|\leq 1}\varphi(2^{-q}\xi)\varphi(2^{-k}\eta)
 \end{equation}
 One can check that $\sigma(\xi,\eta)$ satisfies the assumptions in Coifman-Meyer estimates Lemma \ref{cmbilinear}. Hence we conclude \eqref{claimg5jb}.
 
 Finally, combining \eqref{estifj},\eqref{estig1j},\eqref{estig2j},\eqref{estig3j},\eqref{estig4j},\eqref{estig5j} and the fact that $a^{\epsilon}\geq c_{0}$, we have
 \begin{equation}\label{finalenergy}
\begin{split}
&\sup_{0\leq t\leq T}\|(\mathrm{Id}-S_{K+L})\p_{x}u^{\epsilon}\|_{H^{s}(\R)}^{2}+c_{0}\int_{0}^{T}\mathcal{N}_{K+L,\epsilon}(t)\\
&\quad\lesssim \int_{0}^{T}\left[\mathcal{N}_{K+L,\epsilon}^{\frac{1}{2}}(t)\cdot\|f^{\epsilon}(t,\cdot)\|_{H^{s}_{x}}+(2^{-L}+\delta_{0})\|\p_{x}^{2}u^{\epsilon}\|_{H^{s}_{x}}^{2}\right]dt
\end{split}
 \end{equation}
Hence, we conclude Proposition \ref{high2} by choosing $$\delta_{0}\ll c_{0},\ 2^{-L}\leq \delta_{0},$$ and then employing Proposition \ref{energy} to control the low-frequency part in \eqref{finalenergy}.
\end{proof}
 
We also need the following refined end-point Sobolev embedding, which is essential in the estimate of $\mathcal{G}_{5,j}$ and independent with interest.
 \begin{lemma}\label{refinesobo}The following refined Sobolev-type inequality holds:
\begin{equation}\label{refinesobolev}
[a]_{BMO(\R)}\lesssim \||\p_{x}|^{s}a\|_{L^{\frac{1}{s},\infty}(\R)}.
\end{equation}
\end{lemma}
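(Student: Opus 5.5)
We prove \eqref{refinesobolev} directly in physical space, writing $a=I_{s}g$ with $g:=|\p_{x}|^{s}a\in L^{\frac{1}{s},\infty}(\R)$. Here $I_{s}$ is the Riesz potential with kernel $c_{s}|x-y|^{s-1}$ in one dimension. Since $a$ is only determined modulo constants in $BMO$, it suffices to bound the mean oscillation of $I_{s}g$ over every interval. The bound will be uniform, with constant depending only on $s$. If needed, we first work with $g$ smooth and compactly supported and then extend by density and a weak-$*$ limiting argument, using Fatou on the mean oscillation.

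Fix an interval $Q=B(x_{0},r)$ and split $g=g\mathbf{1}_{2Q}+g\mathbf{1}_{(2Q)^{c}}=:g_{1}+g_{2}$.

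\emph{Local part.} We bound $\frac{1}{|Q|}\int_{Q}|I_{s}g_{1}|dx$. For $x\in Q$, $|I_{s}g_{1}(x)|\lesssim\int_{2Q}|x-y|^{s-1}|g(y)|dy$. Integrating over $Q$ and applying Fubini, $\int_{Q}|x-y|^{s-1}dx\lesssim r^{s}$, so
\begin{equation*}
\frac{1}{|Q|}\int_{Q}|I_{s}g_{1}|\lesssim r^{s-1}\int_{2Q}|g|dy.
\end{equation*}
By Hölder's inequality in Lorentz spaces, $\int_{2Q}|g|\le\|g\|_{L^{\frac{1}{s},\infty}}\|\mathbf{1}_{2Q}\|_{L^{\frac{1}{1-s},1}}\lesssim \|g\|_{L^{\frac1s,\infty}}r^{1-s}$. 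The two powers of $r$ cancel, which is precisely the scaling criticality.

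\emph{Far part.} Subtract the constant $c_{Q}:=I_{s}g_{2}(x_{0})$, which is the main point of the argument. For $x\in Q$ and $y\notin 2Q$, the mean value theorem gives
\begin{equation*}
\big||x-y|^{s-1}-|x_{0}-y|^{s-1}\big|\lesssim r|x_{0}-y|^{s-2}.
\end{equation*}
Hence $|I_{s}g_{2}(x)-c_{Q}|\lesssim r\int_{|y-x_{0}|>2r}|x_{0}-y|^{s-2}|g(y)|dy$. Decompose into dyadic annuli $A_{k}=\{2^{k}r<|y-x_{0}|\le 2^{k+1}r\}$. On each annulus the same Lorentz–Hölder estimate gives $\int_{A_{k}}|g|\lesssim \|g\|_{L^{\frac1s,\infty}}(2^{k}r)^{1-s}$. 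The sum is then bounded by
\begin{equation*}
r\sum_{k\ge1}(2^{k}r)^{s-2}(2^{k}r)^{1-s}\|g\|=\sum_{k\ge1}2^{-k}\|g\|\lesssim\|g\|_{L^{\frac1s,\infty}}.
\end{equation*}
This sum converges because the kernel derivative gains a full power of decay. A side remark is that the constant $c_{Q}$ itself may diverge, since $\int_{|y|>2r}|y|^{s-1}|g|$ need not converge for $g\in L^{\frac1s,\infty}$. We therefore instead subtract $I_{s}g_{2}$ normalized by the kernel $|x-y|^{s-1}-|x_{0}-y|^{s-1}$, whose integral against $g$ converges absolutely by the computation above. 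Equivalently, we define $a$ modulo constants via the renormalized potential.

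Combining the two parts, $\frac{1}{|Q|}\int_{Q}|a-c'_{Q}|\lesssim\|g\|_{L^{\frac1s,\infty}}$ for a suitable constant $c'_{Q}$, and this gives the $BMO$ bound since $[a]_{BMO}\le 2\sup_{Q}\inf_{c}\frac{1}{|Q|}\int_{Q}|a-c|$.

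The main obstacle is justifying the identification of $a$ with the (renormalized) Riesz potential of $|\p_{x}|^{s}a$ modulo constants, for $a$ merely a tempered distribution. The intended route is to apply the estimate to $(\mathrm{Id}-S_{K})a^{\epsilon}$, which is smooth with $\hat{a}$ vanishing near $0$. For such functions $a=I_{s}|\p_{x}|^{s}a$ holds exactly, and that case suffices for the application in \eqref{claimg5jb}.
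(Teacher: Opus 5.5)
Your argument is correct and uses the same overall strategy as the paper: split at the scale $R=|I|$, control the near part by size and the far part by the smoothness of the kernel, and let the powers of $R$ cancel. The implementation, however, differs.

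The paper truncates the \emph{kernel}, writing $D^{-s}b=F_{1,A}+F_{2,A}$ with the cutoff $\eta((x-y)/A)$. It bounds $|F_{1,A}|\lesssim A^{s}\mathcal{M}(b)$ pointwise (Lemma \ref{convolution}) and then uses boundedness of $\mathcal{M}$ on $L^{\frac1s,\infty}$. For the far piece it proves the global bound $\|(F_{2,A})'\|_{L^{\infty}}\lesssim A^{-1}\|b\|_{L^{\frac1s,\infty}}$ by Lorentz--H\"older against the differentiated kernel, and then takes $A=R$.

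You instead truncate the \emph{function} relative to $2Q$. Your near part needs only Fubini and Lorentz--H\"older against $\mathbf{1}_{2Q}$, so no maximal function is required. Your far part uses the mean value theorem on $Q$ and a dyadic-annulus sum, which amounts to computing by hand the $L^{\frac{1}{1-s},1}$ norm of the truncated kernel derivative that the paper uses.

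The more substantive advantage of your version is the renormalization. For general $b\in L^{\frac1s,\infty}$, e.g.\ $b(y)=|y|^{-s}$, the integral defining $D^{-s}b$, and hence $F_{2,A}$, diverges logarithmically, and the paper does not address this. Its argument is rigorous only if $F_{2,A}$ is understood modulo constants through its absolutely convergent derivative, which is exactly what your base-point-subtracted kernel does. You also rightly flag the identification of $a$ with this renormalized potential modulo constants. That step is harmless for the bounded, frequency-localized function $(\mathrm{Id}-S_{K})a^{\epsilon}$ used in \eqref{claimg5jb}.

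One small correction: drop the suggested density argument, since $C_c^\infty$ is not dense in $L^{\frac1s,\infty}$. It is not needed anyway, because your estimates apply directly to every $g\in L^{\frac1s,\infty}$.
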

\begin{remark}Lemma \ref{refinesobo} is a refinement of the standard critical Sobolev inequality since we allow the fractional derivation belongs to the weak-type Lebesgue space. We prove Lemma \ref{refinesobo} by physical methods.
\end{remark}
\begin{proof}[Proof of Lemma \ref{refinesobo}]We only need to show that
\begin{equation}
\||\p_{x}|^{-s}b\|_{BMO}\lesssim \|b\|_{L^{\frac{1}{s},\infty}}.
\end{equation}
In the following proof for brevity we denote $D:=|\p_{x}|$. By the formulation of the operator $ D^{-s}$,
\begin{equation*}
D^{-s}[b](x)\sim_{s} \int_{\R}\frac{1}{|x-y|^{1-s}}b(y)dy.
\end{equation*}
For any interval $ I\subset\R $, we need to show that
\begin{equation*}
\int_{I}|D^{-s}[b]-(D^{-s}[b])_{I}|\frac{dx}{|I|}\lesssim \|b\|_{L^{\frac{1}{s},\infty}}.
\end{equation*}
Consider the standard cut-off function
\begin{equation*}
\eta(x)=
\begin{cases}
1;&\quad |x|\leq 1;\\
0;&\quad |x|\geq 2.
\end{cases}
\end{equation*}
We decompose $D^{-s}[b](x)$ into two parts:
\begin{equation}
\begin{split}
D^{-s}[b](x)&=\int \frac{b(y)}{|x-y|^{1-s}}\eta\left(\frac{x-y}{A}\right)dy+\int \frac{b(y)}{|x-y|^{1-s}}\left(1-\eta\left(\frac{x-y}{A}\right)\right)\\
&=F_{1,A}(x)+F_{2,A}(x),
\end{split}
\end{equation}
where $A>0$ will be determined later. Set $ |I|=R $, we have
\begin{equation}\label{refineso}
\begin{split}
\int_{I}|D^{-s}[b]-(D^{-s}[b])_{I}|\frac{dx}{|I|}&\leq \int_{|I|}|F_{1,A}-(F_{1,A})_{I}|\frac{dx}{|I|}+ \int_{|I|}|F_{2,A}-(F_{2,A})_{I}|\frac{dx}{|I|}\\
&\lesssim R^{-1}\|F_{1,A}\|_{L^{1}(I)}+R\|(F_{2,A})'\|_{L^{\infty}(I)}\\
&\lesssim R^{-s}\|F_{1,A}\|_{L^{\frac{1}{s},\infty}}+R\|(F_{2,A})'\|_{L^{\infty}(I)}.
\end{split}
\end{equation}
One the first term of \eqref{refineso}, by Lemma \ref{convolution},
\begin{equation}
|F_{1,A}(x)|\lesssim \mathcal{M}(b)(x)\int_{|x-y|\leq A}\frac{1}{|x-y|^{1-s}}dy\lesssim A^{s}\cdot  \mathcal{M}(b)(x).
\end{equation}
By virtue of the boundedness of Hardy--Littlewood maximal function in Lorentz space,
\begin{equation*}
R^{-s}\|F_{1,A}\|_{L^{\frac{1}{s},\infty}}\lesssim A^{s}R^{-s}\|b\|_{L^{\frac{1}{s},\infty}}.
\end{equation*}
For the second term of \eqref{refineso}, by direct computation,
\begin{equation*}
|(F_{2,A})'(x)|\sim \int \frac{|b(y)|}{|x-y|^{2-s}}\left(1-\eta\left(\frac{x-y}{A}\right)\right)+A^{-1} \int \frac{|b(y)|}{|x-y|^{1-s}}\eta'\left(\frac{x-y}{A}\right).
\end{equation*}
By H\"older's inequality,
\begin{equation}
\begin{split}
|(F_{2,A})'(x)|&\lesssim \|b\|_{L^{\frac{1}{s},\infty}}\\
&\quad\times\left\|\frac{1}{|x-y|^{2-s}}\left(1-\eta\left(\frac{x-y}{A}\right)\right)+A^{-1}\frac{1}{|x-y|^{1-s}}\eta'\left(\frac{x-y}{A}\right)\right\|_{L^{\frac{1}{1-s},1}}\\
&\lesssim A^{-1} \|b\|_{L^{\frac{1}{s},\infty}}.
\end{split}
\end{equation}
Choosing $A=R$, we finish the proof of Lemma \ref{refinesobo}.
\end{proof}
\subsection{Priori estimates for \eqref{approxdiv}}
Similar to the  priori estimates for \eqref{approxnondiv}, we can also derive the (uniform-in-$\epsilon$) estimates for the approximation system \eqref{approxdiv}. The proofs are similar to Proposition \ref{energy} and \ref{high2}, hence we omit.
\begin{proposition}\label{energydiv}
Let $u^{\epsilon}$ be the classical solution of \eqref{approxdiv}, then we have
\begin{equation}
\sup_{0\leq t\leq T}\int_{\R}|u^{\epsilon}(t,x)|^{2}dx+\int_{0}^{T}\int_{\R}|\p_{x}u^{\epsilon}(t,x)|^{2}dxdt\lesssim \|u_{0}(x)\|_{L^{2}_{x}}^{2}+\|f\|_{L^{2}([0,T];H^{-1}_{x})}^{2}+o_{\epsilon}(1).
\end{equation}
\end{proposition}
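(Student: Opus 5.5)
The plan is to mimic the proof of Proposition \ref{energy}, now testing the divergence-form system \eqref{approxdiv} against $u^{\epsilon}$ itself. Since $u^{\epsilon}$ is smooth and decays (it lies in $H^{\infty}_{t,x}$), the boundary terms in every integration by parts vanish. Multiplying \eqref{approxdiv} by $u^{\epsilon}$ and integrating over $\R$ gives
\begin{equation*}
\frac{1}{2}\frac{d}{dt}\int_{\R}|u^{\epsilon}|^{2}dx+\int_{\R}a^{\epsilon}(t,x)|\p_{x}u^{\epsilon}|^{2}dx=\int_{\R}f^{\epsilon}u^{\epsilon}dx .
\end{equation*}
Here the diffusion term was moved to the left by one integration by parts, $-\int\p_{x}(a^{\epsilon}\p_{x}u^{\epsilon})u^{\epsilon}dx=\int a^{\epsilon}|\p_{x}u^{\epsilon}|^{2}dx$. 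Note that only the lower bound $a^{\epsilon}\geq c_{0}$ enters, and this is inherited from (H1) through the mollification and the truncation $\chi_{\epsilon}$. No upper bound and no regularity of $a^{\epsilon}$ is needed, which matters because $a$ may be unbounded.

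For the forcing term, I will use the $H^{-1}\times H^{1}$ duality:
\begin{equation*}
\left|\int f^{\epsilon}u^{\epsilon}dx\right|\leq \|f^{\epsilon}\|_{H^{-1}}\|u^{\epsilon}\|_{H^{1}}\leq \frac{c_{0}}{4}\left(\|u^{\epsilon}\|_{L^{2}}^{2}+\|\p_{x}u^{\epsilon}\|_{L^{2}}^{2}\right)+C_{c_{0}}\|f^{\epsilon}\|_{H^{-1}}^{2}.
\end{equation*}
The gradient part $\frac{c_{0}}{4}\|\p_{x}u^{\epsilon}\|_{L^{2}}^{2}$ is absorbed by the dissipation $c_{0}\|\p_{x}u^{\epsilon}\|_{L^{2}}^{2}$. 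The remaining $\frac{c_{0}}{4}\|u^{\epsilon}\|_{L^{2}}^{2}$ stays on the right-hand side.

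Next I integrate in time over $[0,t]$ and apply Gronwall's inequality to handle that $\|u^{\epsilon}\|_{L^{2}}^{2}$ term. This costs only a factor $e^{CT}$, which is harmless on the finite interval $[0,T]$ and is absorbed into $\lesssim$. The result is
\begin{equation*}
\sup_{t}\|u^{\epsilon}\|_{L^{2}}^{2}+\int_{0}^{T}\|\p_{x}u^{\epsilon}\|_{L^{2}}^{2}dt\lesssim \|u^{\epsilon}_{0}\|_{L^{2}}^{2}+\|f^{\epsilon}\|_{L^{2}_{t}H^{-1}_{x}}^{2}.
\end{equation*}
Finally, Lemma \ref{mollifierbound}, with $r=0$ for the datum and $t=-1$ for the force, replaces $u_{0}^{\epsilon}$ and $f^{\epsilon}$ by $u_{0}$ and $f$ up to $o_{\epsilon}(1)$ errors. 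This yields the stated bound.

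There is essentially no obstacle here. The only point needing care is the forcing term: $f$ is only in $H^{-1}$, not $\dot H^{-1}$, so the full $H^{1}$ norm of $u^{\epsilon}$ appears, and the $L^{2}$ part must be handled by Gronwall rather than by absorption.
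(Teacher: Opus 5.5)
Your proposal is correct and is essentially the argument the paper intends: the paper omits this proof as analogous to Proposition \ref{energy}, and that analogue is exactly your scheme of testing \eqref{approxdiv} against $u^{\epsilon}$, using only $a^{\epsilon}\geq c_{0}$, and then invoking Lemma \ref{mollifierbound}. The paper leaves implicit how to handle the $L^{2}$ part of the $H^{-1}\times H^{1}$ pairing; your Gronwall step handles it correctly (one could equally absorb $\sqrt{T}\,\|f^{\epsilon}\|_{L^{2}_{t}H^{-1}_{x}}\sup_{t}\|u^{\epsilon}\|_{L^{2}}$ into the supremum term), at the price of a constant depending on $T$ and $c_{0}$.
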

\begin{proposition}\label{highdiv}
Let $u^{\epsilon}$ be the classical solution of \eqref{approxdiv}, then there exists a universal constant $\delta_{0}>0$ such that under the assumptions in Theorem \ref{maintwo} we have
\begin{equation}
\sup_{0\leq t \leq T}\|u^{\epsilon}(t,x)\|_{H^{s}(\R)}^{2}+\int_{0}^{T}\|\p_{x}u^{\epsilon}\|_{H^{s}(\R)}^{2}dt\lesssim \|u_{0}\|_{H^{s}(\R)}^{2}+\|f\|_{L^{2}_{t}H^{s-1}_{x}}^{2}+o_{\epsilon}(1).
\end{equation}
\end{proposition}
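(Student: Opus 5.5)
The plan is to mirror the proof of Proposition \ref{high2}, now at the level of $u^{\epsilon}$ rather than $\p_{x}u^{\epsilon}$. The low-frequency part $\|S_{K+L}u^{\epsilon}\|_{H^{s}}$ and $\int_0^T\|S_{K+L}\p_xu^\epsilon\|_{H^s}^2$ will be bounded by Proposition \ref{energydiv}. This works because Bernstein's inequality bounds these norms by $2^{(K+L)s}$ times $\|u^\epsilon\|_{L^2}$ and $\|\p_x u^\epsilon\|_{L^2}$, respectively. The high-frequency part will come from dyadic energy estimates.

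For $j\geq K+L$, I apply $\Delta_{j}$ to \eqref{approxdiv} and write the commutator
\begin{equation*}
\p_{t}\Delta_{j}u^{\epsilon}-\p_{x}\left(a^{\epsilon}\p_{x}\Delta_{j}u^{\epsilon}\right)=\Delta_{j}f^{\epsilon}+\p_{x}\left(\Delta_{j}(a^{\epsilon}\p_{x}u^{\epsilon})-a^{\epsilon}\p_{x}\Delta_{j}u^{\epsilon}\right).
\end{equation*}
The bracket is exactly the quantity decomposed in \eqref{com1}--\eqref{commutator}, with $\p_{x}^{2}u^{\epsilon}$ replaced by $\p_{x}u^{\epsilon}$. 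It therefore splits into the five pieces $M_{1,j},\dots,M_{5,j}$ with that substitution. I then test against $2^{2js}\Delta_{j}u^{\epsilon}$ and integrate by parts once in $x$. The left side becomes $\frac12\frac{d}{dt}2^{2js}\|\Delta_ju^\epsilon\|_{L^2}^2+2^{2js}\int a^\epsilon|\p_x\Delta_ju^\epsilon|^2$. The right side becomes $2^{2js}\langle\Delta_jf^\epsilon,\Delta_ju^\epsilon\rangle-2^{2js}\int\p_x\Delta_ju^\epsilon\,(M_{1,j}+\dots+M_{5,j})\,dx$. The forcing term is handled by duality:
\begin{equation*}
\sum_j 2^{2js}|\langle\Delta_jf^\epsilon,\Delta_ju^\epsilon\rangle|\lesssim \|f^\epsilon\|_{H^{s-1}}\big(\sum_{j\ge K+L}2^{2js}\|\p_x\Delta_ju^\epsilon\|_{L^2}^2\big)^{1/2},
\end{equation*}
using $\|\Delta_j u\|\sim2^{-j}\|\p_x\Delta_ju\|$ for $j\geq 1$. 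Lemma \ref{mollifierbound} then controls $\|f^\epsilon\|_{L^2_tH^{s-1}}$.

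The five commutator pieces are estimated exactly as $\mathcal G_{1,j},\dots,\mathcal G_{5,j}$ were, with $\p_x u^\epsilon$ playing the role of $\p_x^2u^\epsilon$. Each bound only uses the structural inputs: Lemma \ref{smallkeep} for the high-frequency smallness $C_0\delta_0$; the bound $\|\p_xS_{j-6}a^\epsilon\|_{L^\infty}\lesssim 2^K+2^j\delta_0$ together with the $[\Delta_j,b]$ commutator lemma for $M_2$; Fefferman--Stein plus the embedding $H^s\hookrightarrow L^{\frac{2}{1-2s},2}$ for $M_3$; Bernstein for $M_4$; and Coifman--Meyer with Lemma \ref{refinesobo} for the high--high piece $M_5$. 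This yields
\begin{equation*}
\sum_{j\ge K+L}|\mathcal G_{l,j}|\lesssim (2^{-L}+\delta_0)\|\p_xu^\epsilon\|_{H^s}^2 .
\end{equation*}

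The main obstacle is the $M_3$ term. There, Hölder requires $\mathcal M(\p_xu^\epsilon)\in L^{\frac{2}{1-2s},2}$, and this follows from the Sobolev embedding since $\p_x u^\epsilon\in H^s$ in $L^2_t$. The low-frequency portion of $\p_x u^\epsilon$ is in $L^2$ by Proposition \ref{energydiv}, so the full $H^s$ norm is available, and the piece closes as before. Care is also needed in $M_5$: the multiplier $\sigma(\xi,\eta)$ now carries $2^{js}|\xi|^{-s}$ on $\p_x\tilde u^\epsilon$, and one must check it still satisfies the hypotheses of Lemma \ref{cmbilinear}. The support condition $|\xi|\sim|\eta|\gtrsim 2^{j}$ makes this the same symbol check as before.

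Summing over $j\ge K+L$, integrating in time, and using $a^\epsilon\ge c_0$ gives an inequality of the form \eqref{finalenergy}. Choosing $\delta_0\ll c_0$ and $2^{-L}\le\delta_0$ absorbs the right-hand side. Adding the low-frequency bound from Proposition \ref{energydiv} then concludes the proposition.
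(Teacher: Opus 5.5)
Your proposal is correct and is exactly the argument the paper intends. The paper omits this proof, saying only that it parallels Propositions \ref{energy} and \ref{high2}, and your version is that parallel: dyadic localization for $j\geq K+L$, the decomposition \eqref{commutator} with $\p_x u^{\epsilon}$ in place of $\p_x^2 u^{\epsilon}$, testing against $2^{2js}\Delta_j u^{\epsilon}$, duality for the $H^{s-1}$ forcing, absorption via $\delta_0\ll c_0$ and $2^{-L}\leq\delta_0$, Proposition \ref{energydiv} for the low frequencies, and verbatim transfer of the bounds for $\mathcal{G}_{1,j},\dots,\mathcal{G}_{5,j}$, since the commutator pieces are the same bilinear expressions in $v=\p_x u^{\epsilon}$.
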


\section{Proof of main results}\label{pfmain}
Now we are ready to prove our main result. For brevity we only present the proof of Theorem \ref{main} in detail, since the proof of Theorem \ref{maintwo} just follows from same fashion with slight modification.\\
\textbf{ Proof of Theorem \ref{main}.} Employing Proposition \ref{energy} and Proposition \ref{high2}, there exists a constant $C>0$ such that
\begin{equation}\label{uniform}
\begin{cases}
\|\langle x \rangle^{-\frac{1}{2}}\cdot u^{\epsilon}\|_{L^{\infty}\left([0,T]\times\R\right)}&\leq C;\\
\|\p_{x}u^{\epsilon}\|_{L^{\infty}\left([0,T];H^{s}(\R)\right)}&\leq C;\\
\|\p_{xx}u^{\epsilon}\|_{L^{2}\left([0,T];H^{s}(\R)\right)}&\leq C.
\end{cases}
\end{equation}
Furthermore, for any $n\geq 1$, we have
\begin{equation}\label{localbound}
\|\p_{t}u^{\epsilon}\|_{L^{2}\left([0,T];L^{2}(I_{n})\right)}\leq Cn,
\end{equation}
where $I_{n}:=[-n,n]$.

To see \eqref{localbound}, by H\"older's inequality, and Sobolev's inequality,
\begin{equation}
\|\p_{t}u^{\epsilon}\|_{L^{2}(I_{n})}\lesssim \|a^{\epsilon}\|_{L^{\frac{1}{s},\infty}}\|\p_{xx}u^{\epsilon}\|_{L^{\frac{2}{1-2s},2}}\lesssim \|a^{\epsilon}\|_{L^{\frac{1}{s},\infty}(I_{n})}\|\p_{xx}u^{\epsilon}\|_{H^{s}(\R)}.\\
\end{equation}
Hence,
\begin{equation}
\|\p_{t}u^{\epsilon}\|_{L^{2}\left([0,T];L^{2}(I_{n})\right)}\lesssim \sup_{t\in[0,T]}\|a(t,x)\|_{L^{\frac{1}{s},\infty}_{x}(I_{n+1})}.
\end{equation}
Clearly,
\begin{equation}
\begin{split}
&\|(\mathrm{Id}-S_{0})a\|_{L^{\frac{1}{s},\infty}_{x}(I_{n+1})}\lesssim \||\p_{x}|^{s}(\mathrm{Id}-S_{0})a\|_{L^{\frac{1}{s},\infty}_{x}},\\
&\|S_{0}a\|_{L^{\frac{1}{s},\infty}_{x}(I_{n+1})}\lesssim n^{s}\|S_{0}a\|_{L^{\infty}_{x}(I_{n+1})}\lesssim n^{s}\left(|S_{0}a(t,0)|+n^{1-s}\|\p_{x}S_{0}a\|_{L^{\frac{1}{s},\infty}_{x}}\right).
\end{split}
\end{equation}
Hence we obtain \eqref{localbound} by employing the assumptions on $a(t,x)$.

By weak compactness of above functional spaces, there exists $u(t,x)$ also satisfies the bound  \eqref{uniform} and \eqref{localbound}, such that
\begin{equation}\label{weak}
\begin{cases}
u^{\epsilon}\rightarrow u,&\quad \text{weakly\ star\ in}\ L^{\infty}\left([0,T];L^{\infty}_{loc}(\R)\right);\\
\p_{x}u^{\epsilon}\rightarrow \p_{x}u,&\quad \text{weakly\ star\ in}\ L^{\infty}\left([0,T];H^{s}(\R)\right);\\
\p_{xx}u^{\epsilon}\rightarrow \p_{xx}u,&\quad \text{weakly in}\ L^{2}\left([0,T];H^{s}(\R)\right);\\
\p_{t}u^{\epsilon}\rightarrow \p_{t}u,&\quad\text{weakly in}\ L^{2}\left([0,T];L^{2}_{loc}(\R)\right).
\end{cases}
\end{equation}
We now divide the proof of Theorem 1.1 into three steps.\\
\textit{Step 1.$u$ satisfies the equation \eqref{heateqnondiv}}It suffices to show that for any $\eta\in C_{c}^{\infty}\left((0,T)\times\R\right)$,
\begin{equation*}
\iint_{(0,T)\times\R} \left(\p_{t}u-a(t,x)\p_{x}^{2}u\right)\eta(t,x) dxdt=\iint_{(0,T)\times\R}f(t,x)\eta(t,x)dxdt.
\end{equation*}
Recall the equation \eqref{approxnondiv}, we only need to show that
\begin{equation*}
\lim_{\epsilon\to 0}\iint_{(0,T)\times\R}\left(\p_{t}u^{\epsilon}-a^{\epsilon}(t,x)\p_{x}^{2}u^{\epsilon}\right)\eta dxdt=\iint_{(0,T)\times\R} \left(\p_{t}u-a(t,x)\p_{x}^{2}u\right)\eta dxdt.
\end{equation*}
By weak convergence,
\begin{equation*}
\lim_{\epsilon\to 0}\iint_{(0,T)\times\R}\p_{t}u^{\epsilon}\eta dxdt=\iint_{(0,T)\times\R}\p_{t}u\eta dxdt.
\end{equation*}
For the second term, we write
\begin{equation}
\begin{split}
&\iint_{(0,T)\times\R}a^{\epsilon}(t,x)\p_{x}^{2}u^{\epsilon}\cdot \eta(t,x) dxdt-\iint_{(0,T)\times\R}a(t,x)\p_{x}^{2}u\cdot \eta(t,x) dxdt\\
&\quad=\iint_{(0,T)\times\R}\big(a^{\epsilon}(t,x)-a(t,x)\big)\p_{x}^{2}u^{\epsilon}\eta+\iint_{(0,T)\times\R}(\p_{x}^{2}u^{\epsilon}-\p_{x}^{2}u)a(t,x)\eta\\
&\quad =I+J.
\end{split}
\end{equation}
The second term tends to zero by weakly convergence of $\p_{x}^{2}u^{\epsilon}$ and the fact that $a(t,x)\in L^{\infty}_{t}L^{\frac{1}{s},\infty}_{loc}(\R)$.

For the first term, note that there exists $M>0$ such that $\eta(t,x)=0$ if $|x|\geq M$. Therefore,
\begin{equation*}
|I|\lesssim \|a^{\epsilon}(x)-a(x)\|_{L^{2}\left([0,T]\times [-M,M] \right)}\cdot \|\p_{x}^{2}u^{\epsilon}\|_{L^{2}\left([0,T]\times [-M,M] \right)}
\end{equation*}
From the definition of $a^{\epsilon}(t,x)$ \eqref{defapp} we can see that
\begin{equation*}
\lim_{\epsilon\to 0}\|a^{\epsilon}(x)-a(x)\|_{L^{2}\left([0,T]\times [-M,M] \right)}=0.
\end{equation*}
Hence $I$ tends to zero. Therefore $u$ satisfies the equation \eqref{heateqnondiv}.\\
\textit{Step 2. $u$ satisfies the initial data.} Fixed a compact subset $K\subset\R$ (see \cite{S87}), applying Aubin-Lions Lemma, we obtain that
\begin{equation*}
u^{\epsilon}\rightarrow u,\quad \text{uniformlly\ in}\ K\times [0,T]. 
\end{equation*}
For any $ x\in K $,
\begin{equation*}
\begin{split}
|u(t,x)-u_{0}(x)|&\leq|u(t,x)-u^{\epsilon}(t,x)|+|u^{\epsilon}(t,x)-u^{\epsilon}(0,x)|+|u^{\epsilon}(0,x)-u_{0}(x)|\\
&\leq \|u^{\epsilon}-u\|_{L^{\infty}(K\times [0,T])}+\|u_{0}^{\epsilon}-u_{0}\|_{L^{\infty}(K)}+|u^{\epsilon}(t,x)-u^{\epsilon}(0,x)|.
\end{split}
\end{equation*}
By the continuity of $u^{\epsilon}$,
\begin{equation*}
\limsup_{t\to 0}|u(t,x)-u_{0}(x)|\leq \|u^{\epsilon}-u\|_{L^{\infty}(K\times [0,T])}+\|u_{0}^{\epsilon}-u_{0}\|_{L^{\infty}(K)}
\end{equation*}
Letting $ \epsilon\to 0$, we obtain $u(0,x)=u_{0}(x)$.\\
\textit{Step 3: Uniqueness.} We need to show that the following equation
\begin{equation}\label{finaleq}
\begin{cases}
\p_{t}u-a(t,x)\p_{xx}u=0,\ t\in(0,T),\ x\in\R,\\
u(0,x)=0,
\end{cases}
\end{equation}
has only a trivial solution in the admissible class \eqref{weak}. Indeed, define
\begin{equation*}
\zeta_{N}(x)=
\begin{cases}
1,\quad |x|\leq N,\\
0,\quad |x|\geq 2N,
\end{cases}
\quad |\p^{k}\zeta_{N}|\lesssim N^{-k}.
\end{equation*}
 Testing \eqref{finaleq} by $-\p_{xx}u\zeta_{N}(x)$, we obtain the following energy inequality:
\begin{equation}
\begin{split}
& \frac{\p_t}{2}\left[\int_{\R}|\p_x u|^{2}|\zeta_N|^{2}\,dx\right]
+\int_{\R}a(t,x)|\p_{xx}u|^{2}|\zeta_N|^{2}\,dx \\
&\quad=-2\int_{\R}[(\mathrm{Id}-S_0)a]\,\p_x u\,\p_{xx}u\,\zeta_N(x)\zeta_N'(x)\,dx \\
&\quad\quad-2\int_{\R}S_0 a\,\p_x u\,\p_{xx}u\,\zeta_N(x)\zeta_N'(x)\,dx \\
&\quad=-2\int_{\R}[(\mathrm{Id}-S_0)a]\,\p_x u\,\p_{xx}u\,\zeta_N\zeta_N'\,dx \\
&\quad\quad+\int_{\R}|\p_x u|^2
\left(\p_x S_0 a\,\zeta_{N}(\zeta_{N})'+S_0 a|(\zeta_N)'|^2+\zeta_N(\zeta_N)''\right)dx \\
&\quad\lesssim N^{-1}\bigg(
\|(\mathrm{Id}-S_0)a\|_{L^{\frac{1}{s},\infty}}
\|\p_x u\|_{L^{\frac{2}{1-2s},2}}
\|\p_{xx}u\|_{L^2} \\
&\quad\quad\quad +\|\p_x u\|_{L^2}\|\p_x S_0 a\|_{L^\infty}
+N^{-1}\|\p_x u\|_{L^2}\|S_0 a\|_{L^\infty(I_{2N})}
\bigg) \\
&\quad\lesssim N^{-1}\left(
\|\p_x u\|_{H^s}\|\p_{xx}u\|_{L^2}
+\|\p_x u\|_{L^2}^2
\right).
\end{split}
\end{equation}
Hence,
\begin{equation}
\sup_{0\leq t\leq T}\int_{\R}|\p_{x}u|^{2}|\zeta_{N}|^{2}\lesssim N^{-1},
\end{equation}
since $u$ belongs to the admissible class \eqref{weak}. By Fatou's lemma we obtain that $\p_{x}u$ vanishes identical, and so is $u$. Hence we finish the proof of Theorem \ref{main}.

	\bibliographystyle{plain}
	\bibliography{heatrefer}	

\end{document}